\def\@tocline#1#2#3#4#5#6#7{\relax
\ifnum #1>\c@tocdepth 
  \else
    \par \addpenalty\@secpenalty\addvspace{#2}%
\begingroup \hyphenpenalty\@M
    \@ifempty{#4}{%
      \@tempdima\csname r@tocindent\number#1\endcsname\relax
 }{%
   \@tempdima#4\relax
 }%
 \parindent\z@ \leftskip#3\relax \advance\leftskip\@tempdima\relax
 \rightskip\@pnumwidth plus4em \parfillskip-\@pnumwidth
 #5\leavevmode\hskip-\@tempdima #6\nobreak\relax
 \ifnum#1<0\hfill\else\dotfill\fi\hbox to\@pnumwidth{\@tocpagenum{#7}}\par
 \nobreak
 \endgroup
  \fi}
    \newcommand*{\qrr@gobblenexttocentry}[5]{}
    \newcommand*{\qrr@gobblenexttocentry}[4]{}
\newcommand*{\addsubsection}{%
    \addtocontents{toc}{\protect\qrr@gobblenexttocentry}%
    \subsection}
\newcommand{\barr}{\overline}
\newcommand{\QQ}{\mathbb{Q}}
\newcommand{\ZZ}{\mathbb{Z}}
\newcommand{\PP}{\mathbb{P}}
\newcommand{\NN}{\mathbb{N}}
\newcommand{\cA}{\mathcal{A}}
\newcommand{\cD}{\mathcal{D}}
\newcommand{\cG}{\mathcal{G}}
\newcommand{\cS}{\mathcal{S}}
\newcommand{\cT}{\mathcal{T}}
\newtheorem{thm}{Theorem}[subsection]
\newtheorem{cor}[thm]{Corollary}
\newtheorem{lem}[thm]{Lemma}
\newtheorem{prop}[thm]{Proposition}
\theoremstyle{definition}
\newtheorem{define}[thm]{Definition}
\theoremstyle{remark}
\newtheorem{rem}[thm]{Remark}
\DeclareMathOperator{\Ext}{Ext}
\DeclareMathOperator{\im}{Im}
\DeclareMathOperator{\Top}{Top}
\DeclareMathOperator{\Set}{\cS et}
\DeclareMathOperator{\Grp}{\cG rp}
\DeclareMathOperator{\Hom}{Hom}
\DeclareMathOperator{\precolim}{colim}
\DeclareMathOperator{\des}{des}
\DeclareMathOperator{\Id}{Id}
\DeclareMathOperator{\id}{id}
\DeclareMathOperator{\e}{e}
\DeclareMathOperator{\C}{C}
\DeclareMathOperator{\inc}{inc}
\DeclareMathOperator{\Ab}{Ab}
\DeclareMathOperator{\AbC}{\cA b}
\newcommand{\op}{\mathrm{op}}
\def\colim{\mathop{\precolim}}
\def\lrar{\longrightarrow}
\title{The abelianization of inverse limits of groups}
\author{
Ilan Barnea \and Saharon Shelah}
\address{Department of Mathematics\\
Hebrew University of Jerusalem\\
Jerusalem\\
Israel}
\email{ilanbarnea770@gmail.com}
\address{Department of Mathematics\\
Hebrew University of Jerusalem\\
Jerusalem\\
Israel}
\email{shelah@math.huji.ac.il}
\thanks{The second author was partially supported by European Research Council grant 338821. Publication 1084.}
\keywords{perfect groups, abelian groups, inverse limits, abelianization, commutator subgroup, cotorsion groups}
\begin{document}

\begin{abstract}
The abelianization is a functor from groups to abelian groups, which is left adjoint to the inclusion functor. Being a left adjoint, the abelianization functor commutes with all small colimits. In this paper we investigate the relation between the abelianization of a limit of groups and the limit of their abelianizations. We show that if $\mathcal{T}$ is a countable directed poset and $G:\mathcal{T}\longrightarrow\mathcal{G} rp$ is a diagram of groups that satisfies the Mittag-Leffler condition, then the natural map
$$\mathrm{Ab}(\lim_{t\in\mathcal{T}}G_t)\longrightarrow\lim_{t\in\mathcal{T}}\mathrm{Ab}(G_t)$$
is surjective, and its kernel is a cotorsion group. In the special case of a countable product of groups, we show that the Ulm length of the kernel does not exceed $\aleph_1$.
\end{abstract}

\maketitle
\tableofcontents

\section*{Introduction}

The abelianization functor is a very fundamental and widely used construction in group theory and other mathematical fields. This is a functor
$$\Ab:\Grp\lrar\AbC,$$
from the category of groups to the category of abelian groups, equipped with a natural projection map
$$\pi_G:G\lrar\Ab(G),$$
for every group $G$. This construction is universal in the sense
that for any group $G$, any abelian group $A$ and any morphism of groups $f:G\to A$, there is a unique morphism of (abelian) groups $\barr{f}:\Ab(G)\lrar A$ such that the following diagram commutes:
  $$\xymatrix{G\ar[r]^{\pi_G}\ar[dr]_f & \Ab(G)\ar[d]^{\barr{f}}\\
                                & A.}$$

Expressed in the language of category theory, the above universal property implies that the functor $\Ab:\Grp\lrar \AbC$ is left adjoint to the inclusion functor $\inc: \AbC\lrar \Grp$.
Being a left adjoint, the functor $\Ab$ commutes with all small colimits. That is, given any small category $\cD$, and any functor (diagram) $F:\cD\lrar \Grp$, the natural morphism
$$\colim_{d\in\cD}\Ab(F_d)\lrar\Ab(\colim_{d\in\cD}F_d)$$
is an isomorphism. However, the abelianization functor certainly does not commute with all small limits. That is, given a
small category $\cD$ and a diagram $G:\cD\lrar \Grp$, the natural morphism
$$\Ab(\lim_{d\in\cD}G_d)\lrar\lim_{d\in\cD}\Ab(G_d).$$
need not be an isomorphism. Since this is a morphism of abelian groups, a natural way to ``measure" how far it is from being an isomorphism is to consider its kernel an cokernel. Thus, a natural question is whether the kernel and cokernel of the natural map above can be any abelian groups, or are there limitations?

In this paper we consider the case where the diagram category $\cD$ is a countable directed poset, considered as a category which has a single morphism $t\to s$ whenever $t\geq s$. We show
\begin{thm}[see Theorem~\ref{t:S equation}]\label{t:main cotor}
  Let $\cT$ be a countable directed poset and let $G:\cT\lrar\Grp$ be a diagram of groups that satisfies the Mittag-Leffler condition. Then the natural map
$$\Ab(\lim_{t\in\cT}G_t)\lrar\lim_{t\in\cT}\Ab(G_t)$$
is surjective, and its kernel is a cotorsion group.
\end{thm}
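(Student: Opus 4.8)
The plan is to analyze the natural map $\varphi : \Ab(\lim_{t} G_t) \to \lim_t \Ab(G_t)$ by comparing it with the right-exact sequence coming from the commutator subgroups. For each $t$ let $C_t = [G_t, G_t]$ be the commutator subgroup, so that we have a short exact sequence of groups $1 \to C_t \to G_t \to \Ab(G_t) \to 1$, functorially in $t$. Similarly write $L = \lim_t G_t$, $C = [L,L]$, and $K = \lim_t C_t$. There is an obvious inclusion $C \hookrightarrow K$ (the commutator subgroup of the limit lands in the limit of the commutator subgroups, since each projection $L \to G_t$ sends $C$ into $C_t$), and the cokernel of the map $\varphi$ is controlled by $K/C$: indeed, $\Ab(L) = L/C$, and $\lim_t \Ab(G_t)$ sits in the exact sequence arising from applying $\lim$ to $1 \to C_t \to G_t \to \Ab(G_t) \to 1$. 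So the first step is to set up the six-term $\lim$--$\lim^1$ exact sequence (valid for towers indexed by a countable directed poset after reducing to $\NN$ via cofinality) and read off that $\operatorname{Coker}(\varphi) \hookrightarrow \lim^1_t C_t$ and $\Ker(\varphi) = K/C$.

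The next step is surjectivity. Here the Mittag-Leffler hypothesis on $G$ is the key input: Mittag-Leffler for $G$ gives $\lim^1_t G_t = 0$ when the groups are countable, but more to the point, one shows the induced tower $\{C_t\}$ and the tower $\{\Ab(G_t)\}$ inherit enough of a Mittag-Leffler-type property that $\lim^1_t C_t$ vanishes, or at least that the connecting map into it is zero. Concretely: given a compatible system $(\bar g_t) \in \lim_t \Ab(G_t)$, lift each $\bar g_t$ to some $g_t \in G_t$; the obstruction to choosing the lifts compatibly lives in $\lim^1_t C_t$, and the Mittag-Leffler condition on $\{G_t\}$ (images stabilize) lets us correct the lifts successively — this is exactly the standard argument that Mittag-Leffler towers have vanishing $\lim^1$, applied to the relevant subquotient tower. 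This yields the surjectivity claim; I expect this part to be essentially bookkeeping once the correct Mittag-Leffler statement for $\{C_t\}$ is isolated.

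The heart of the theorem is showing $\Ker(\varphi) = K/C$ is cotorsion, i.e. $\Ext^1_\ZZ(\QQ, K/C) = 0$, equivalently (for the relevant groups) that $K/C$ is a quotient of a group with no nontrivial maps from $\QQ$-related torsion-free divisible pieces — the cleanest criterion being that an abelian group $A$ is cotorsion iff every extension of $\QQ$ by $A$ splits, and a useful sufficient condition is that $A$ is a quotient of an algebraically compact (e.g. a product of copies of $\ZZ$, or more generally a group that is complete in some linear topology) group by a ``nice'' subgroup. The strategy is: realize $K = \lim_t C_t$ as a closed subgroup of the product $\prod_t C_t$, hence as the limit of a tower, and exploit that such limits of towers of groups are complete in the inverse-limit topology; then show $K/C$ is cotorsion by showing $C$ is ``dense enough'' or by writing $K/C$ as an extension built from pieces each of which is cotorsion (cotorsion groups are closed under extensions and quotients). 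A slick route: every reduced cotorsion group is complete in its $\ZZ$-adic-like topology, and limits of countable towers are cotorsion because $\Ext^1(\QQ, -)$ commutes appropriately with the $\lim$--$\lim^1$ sequence — since $\QQ$ has projective dimension $1$, $\Ext^1(\QQ, \lim_t C_t)$ is squeezed by $\lim^1_t \Hom(\QQ, C_t)$ and $\lim_t \Ext^1(\QQ, C_t)$; one must then argue these vanish or that the sequence for $K/C$ forces cotorsion-ness. The main obstacle, which I expect to require the genuine work of the paper, is precisely this last point: controlling $\Ext^1(\QQ, K/C)$, since $K/C$ is a quotient of the well-behaved group $K$ by the possibly badly-behaved subgroup $C = [L,L]$, and one has little a priori handle on $C$; the argument will likely proceed by showing $\Ext^1(\QQ, K/C)$ injects into $\Ext^2(\QQ, C)$-type terms that vanish for dimension reasons, or by a direct transfinite/Ulm-style construction of splittings of extensions of $\QQ$ by $K/C$, which is also where the $\aleph_1$ bound on the Ulm length in the product case comes from.
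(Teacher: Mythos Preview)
Your setup and surjectivity argument are essentially the paper's: reduce to $\cT=\NN$ by cofinality, apply the six-term $\lim/\lim^1$ sequence to the tower $1\to C_t\to G_t\to \Ab(G_t)\to 1$, observe that Mittag--Leffler for $G$ implies Mittag--Leffler for $\{C_t\}$ (because $\phi(C(G_n))=C(\phi(G_n))$ for any structure map $\phi$), hence $\lim^1 C_t=0$, which gives surjectivity and the identification $\Ker(\varphi)\cong K/C$ with $K=\lim_t C_t$ and $C=[L,L]$. (Minor point: Mittag--Leffler alone gives $\lim^1=0$ for $\NN$-towers; no countability of the groups is needed.)

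The genuine gap is in your cotorsion argument. All of your proposed routes---the $\Ext^1(\QQ,\lim_t C_t)$ computation via a $\lim/\lim^1$ sequence, the long exact sequence pushing $\Ext^1(\QQ,K/C)$ into an $\Ext^2(\QQ,C)$-type term, and the ``$K$ is algebraically compact hence $K/C$ is cotorsion as a quotient''---presuppose that $K$ and $C$ are abelian. They are not: $C_t=[G_t,G_t]$ is typically non-abelian, so $K=\lim_t C_t$ is non-abelian, and $C=[L,L]$ certainly is. There is no short exact sequence of abelian groups $0\to C\to K\to K/C\to 0$ to which $\Ext^*_\ZZ(\QQ,-)$ can be applied, and ``cotorsion'' has no meaning for $K$. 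Your passing remark that one might show ``$C$ is dense enough'' in $K$ is in fact the correct intuition, but you do not isolate what density means or how to use it. The paper's proof makes this precise: the key observation is that $\phi_n(C)=\phi_n(K)$ for every $n$ (both equal $C(G'_n)$ after passing to the stabilized tower $G'$), and this is exactly the hypothesis of the paper's main technical lemma (Theorem~\ref{t:equation gen}). That lemma says that if $F\triangleleft H_\omega$ is normal with abelian quotient and $\phi_n(F)=\phi_n(H_\omega)$ for all $n$, then $H_\omega/F$ is cotorsion. Its proof does not use any $\Ext$ machinery at all: it uses the equational characterization of cotorsion (an abelian group $T$ is cotorsion iff every system $g_n=f_n+(n+1)g_{n+1}$ is solvable in $T$, equivalently $\lim^1(\cdots\xrightarrow{3}T\xrightarrow{2}T\xrightarrow{1}T)=0$), and then \emph{explicitly constructs} a solution $(g_n)$ in $H_\omega/F$ by first lifting each $f_n$ to $\bar f_n\in H_\omega$ with $\phi_l(\bar f_n)=e$ for $l<n$ (possible precisely because of the equal-images hypothesis), and then solving the recursion coordinate-by-coordinate in each $H_l$. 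This hands-on construction in the non-abelian tower is the substance of the argument, and none of your homological substitutes can replace it.
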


Recall that $G$ satisfies the Mittag-Leffler condition if for every $t\in\cT$ there exists $s\geq t$ such that for every $r\geq s$ we have $$\im (G_s\to G_t)=\im (G_r\to G_t).$$
If $G$ has surjective connecting homomorphisms or $G$ is diagram of finite groups then $G$ satisfies Mittag-Leffler condition.

\begin{rem}
  A generalization of Theorem \ref{t:main cotor} to arbitrary  directed posets, as well as the question of which cotorsion groups can appear as the kernel of a map as in Theorem \ref{t:main cotor}, will be addressed in future papers.
\end{rem}

Recall that a group $G$ is called \emph{perfect} if $\Ab(G)=0$. We thus obtain the following corollary:

\begin{cor}
  Let $\cT$ be a countable directed poset and let $G:\cT\lrar\Grp$ be a diagram of perfect groups that satisfies the Mittag-Leffler condition. Then
$\Ab(\lim_{t\in\cT}G_t)$
is a cotorsion group.
\end{cor}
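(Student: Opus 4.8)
The plan is to deduce the corollary directly from Theorem~\ref{t:main cotor}, observing that the perfection hypothesis collapses the target of the comparison map so that the kernel controlled by the theorem becomes the entire abelianization of the limit. First I would recall that a group $G$ is perfect precisely when $\Ab(G)=0$. Since every $G_t$ is assumed perfect, we have $\Ab(G_t)=0$ for all $t\in\cT$, so the diagram $t\mapsto\Ab(G_t)$ is the constant zero diagram.

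Next I would identify the codomain of the natural comparison map. The limit of the constant zero diagram is the zero group, hence
$$\lim_{t\in\cT}\Ab(G_t)=0.$$
Consequently the natural map
$$\Ab(\lim_{t\in\cT}G_t)\lrar\lim_{t\in\cT}\Ab(G_t)=0$$
has trivial codomain, and therefore its kernel is all of $\Ab(\lim_{t\in\cT}G_t)$.

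Finally I would invoke Theorem~\ref{t:main cotor}. Since $\cT$ is a countable directed poset and $G$ satisfies the Mittag-Leffler condition, that theorem guarantees that the comparison map is surjective and that its kernel is a cotorsion group. Because the kernel here coincides with the full source group $\Ab(\lim_{t\in\cT}G_t)$, we conclude that this group is itself cotorsion, which is exactly the assertion of the corollary.

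I do not expect any genuine obstacle in this deduction: once Theorem~\ref{t:main cotor} is in hand, the corollary is a purely formal consequence, with the perfection hypothesis playing the single role of forcing $\lim_{t\in\cT}\Ab(G_t)=0$. All the substantive work — bounding and analyzing the kernel and showing it is cotorsion using the Mittag-Leffler and countability assumptions — is carried out in the proof of the main theorem, so the only care needed here is to verify that the theorem's hypotheses (countable directed $\cT$ and the Mittag-Leffler condition on $G$) are transported verbatim to the present setting, which they are by assumption.
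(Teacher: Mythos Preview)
Your proposal is correct and is exactly the deduction the paper intends: the corollary is stated immediately after Theorem~\ref{t:main cotor} with no separate proof, as a direct consequence of the observation that perfection of each $G_t$ forces $\lim_{t\in\cT}\Ab(G_t)=0$, so the kernel of the comparison map is all of $\Ab(\lim_{t\in\cT}G_t)$.
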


Cotorsion groups are abelian groups $A$ that satisfy $\Ext(\QQ,A)=0$ (or, equivalently, $\Ext(F,A)=0$ for any torsion free abelian group). That is, an abelian group $A$ is cotorsion iff for every group $B$, containing $A$ as a subgroup, and satisfying $B/A\cong\QQ$, we have that $A$ is a direct summand in $B$. This is a very important and extensively studied class of abelian groups. There is a structure theorem, due to Harrison \cite{Har}, which classifies cotorsion groups in terms of a countable collection of cardinals together with a reduced torsion group. We note that the functor
$${\lim}^1:\Grp^\NN\lrar \Set_*$$
is used in the proof of Theorem \ref{t:main cotor}, and it is known that an abelian group is cotorsion if and only if it is in the image of this functor when restricted to $\AbC^\NN$ (see \cite{WaHu} and Section \ref{ss:lim}).

Our main tool in proving Theorem \ref{t:main cotor} is the following result, which we believe is interesting in its own right:

\begin{thm}[see Theorem~\ref{t:equation gen}]\label{t:equation gen 0}
Let $G:\NN\lrar \Grp$ be a tower of groups. We denote by $G_\omega=\lim_{n\in \NN}G_n$ its limit
and by
$$\phi_n:G_\omega\to G_n$$
the natural map, for every $n\in\NN$.
Suppose that $F\triangleleft G_\omega$ is a normal subgroup such that the quotient $G_\omega/F$ is abelian and for every $n\in\NN$ we have
  $$\phi_n(F)=\phi_n(G_\omega).$$
Then the abelian group $G_\omega/F$ is cotorsion.
\end{thm}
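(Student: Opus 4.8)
The plan is to use the characterization of cotorsion groups in terms of the $\lim^1$ functor: an abelian group $A$ is cotorsion if and only if $A\cong\lim^1_{n}A_n$ for some tower of abelian groups $(A_n)_{n\in\NN}$ with surjective (or at least Mittag-Leffler) transition maps — equivalently, $A$ is cotorsion iff it is a quotient of $\prod_n B_n$ by the closure of $\bigoplus_n B_n$ for some tower, i.e.\ iff it arises as $\lim^1$ of a tower of abelian groups. So the goal is to exhibit the quotient $G_\omega/F$ as such a $\lim^1$. The natural candidate tower is built from the subgroups $N_n:=\ker(\phi_n)\triangleleft G_\omega$. These form a decreasing filtration of $G_\omega$ with $\bigcap_n N_n=\{e\}$, and $G_\omega$ is complete with respect to this filtration (this is the content of $G_\omega=\lim_n G_n$; one should be a bit careful since the $G_n$ need not be the quotients $G_\omega/N_n$, but $\phi_n(G_\omega)\cong G_\omega/N_n$ and the tower $(G_\omega/N_n)_n$ has the same limit $G_\omega$, so we may as well replace $G_n$ by $G_\omega/N_n$ and assume $\phi_n$ surjective).

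First I would set $A:=G_\omega/F$, let $q:G_\omega\to A$ be the quotient map, and push the filtration forward: set $A_n:=q(N_n)=N_nF/F\subseteq A$. This is a decreasing chain of subgroups of the abelian group $A$. The hypothesis $\phi_n(F)=\phi_n(G_\omega)$ says exactly that $G_\omega=FN_n=N_nF$ for every $n$, hence $q(N_n)=A$, i.e.\ $A_n=A$ for all $n$ — that is not directly useful, so instead the relevant filtration is the one induced on $A$ by writing elements of $G_\omega$ via Cauchy sequences. The better approach: use the hypothesis to show that the map $q$ is "compatible with completions" in the following sense. Since $G_\omega=\lim_n(G_\omega/N_n)$ and $A$ is abelian, $q$ factors through $\Ab$ of each quotient; consider the tower of abelian groups $A^{(n)}:=\Ab(G_\omega/N_n)=(G_\omega/N_n)/[\,\cdot\,,\cdot\,]$ with its natural surjections $A^{(n+1)}\twoheadrightarrow A^{(n)}$, and its limit $\widehat A:=\lim_n A^{(n)}=\Ab(G_\omega/\text{--})$-limit. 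The canonical map $A=\Ab(G_\omega)\to\widehat A$ is precisely the map studied in the paper, and the hypothesis $\phi_n(F)=\phi_n(G_\omega)$ is what forces this map to be zero (each composite $A\to A^{(n)}$ kills the image of $F$, which is everything).

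So the key computation I expect to carry out is: identify $A=G_\omega/F$ with the $\lim^1$ of a tower. Concretely, apply the six-term $\lim$–$\lim^1$ exact sequence to the short exact sequences $1\to N_n\to G_\omega\to G_\omega/N_n\to 1$ of towers (in $n$), or rather to $1\to N_n/(N_n\cap[G_\omega,G_\omega])\to \cdots$. The cleanest route: the sequence $1\to F\to G_\omega\to A\to 1$ together with the filtration $N_n$ gives, upon applying $\Ab$ and taking limits, that $A$ is the cokernel of $\lim_n F/(F\cap N_n)\text{-type map}$ — more precisely $A\cong\lim^1_n \big(F\cap N_n\big)^{\mathrm{ab,rel}}$, where the transition maps are Mittag-Leffler because $G$ satisfies Mittag-Leffler (here we must invoke that the tower $(N_n)$ or $(F\cap N_n)$ inherits Mittag-Leffler from $G$ — that is where the Mittag-Leffler hypothesis of the ambient theorem enters, though in the present Theorem~\ref{t:equation gen 0} it is stated for a plain tower, so Mittag-Leffler of $N\cap F$ should be automatic or needs the surjectivity reduction above). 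Then by the cited result (see \cite{WaHu} and Section~\ref{ss:lim}) this $\lim^1$ of a tower of abelian groups is cotorsion, and we are done.

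The main obstacle will be the bookkeeping in the non-abelian short exact sequences of towers: $\lim^1$ is only a pointed-set-valued functor on $\Grp^\NN$, so the "six-term exact sequence" is an exact sequence of pointed sets and one must argue by hand that the relevant connecting maps are group homomorphisms on the pieces where it matters, and that exactness at the spot computing $G_\omega/F$ really yields an isomorphism of groups rather than just a bijection of sets. Making the reduction to surjective $\phi_n$ clean, and checking that the induced filtration tower on the abelianization side has Mittag-Leffler transition maps so that its $\lim^1$ lands in Harrison's class of cotorsion groups, is the technical heart; everything else is formal manipulation of the universal property of $\Ab$ and the hypothesis $\phi_n(F)=\phi_n(G_\omega)$, which I read as the statement "$F$ is dense in $G_\omega$ for the $N_\bullet$-filtration topology."
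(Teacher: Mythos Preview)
Your plan has a genuine gap that you yourself flag but do not close: the passage from ``$G_\omega/F$ is a $\lim^1$'' to ``$G_\omega/F$ is cotorsion'' requires a tower of \emph{abelian} groups, and you never produce one. The natural candidate $(F\cap N_n)_n$ does give a pointed-set bijection $G_\omega/F\cong{\lim}^1(F\cap N_n)$ via the six-term sequence for $1\to F\cap N_n\to F\to F/(F\cap N_n)\to 1$ (using $F/(F\cap N_n)\cong G_\omega/N_n$ from the hypothesis $FN_n=G_\omega$), but the groups $F\cap N_n$ need not be abelian, and the Warfield--Huber characterization says nothing about $\lim^1$ of non-abelian towers. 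Your suggested replacement $(F\cap N_n)^{\mathrm{ab,rel}}$ is left undefined, and passing to any abelian quotient tower would require the kernel tower to have trivial $\lim^1$, which is exactly the kind of Mittag-Leffler condition you cannot assume here: Theorem~\ref{t:equation gen 0} has \emph{no} Mittag-Leffler hypothesis on $G$ (that hypothesis belongs to Theorem~\ref{t:main cotor}, which is deduced \emph{from} the present result). There is also a slip midway where you write ``$A=\Ab(G_\omega)$''; in general $F$ strictly contains $[G_\omega,G_\omega]$, so $A$ is only a quotient of $\Ab(G_\omega)$, and the map $\Ab(G_\omega)\to\lim_n\Ab(G_\omega/N_n)$ is not the object under study.

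The paper's proof takes a completely different and more elementary route that sidesteps all of this. It uses the criterion (Proposition~\ref{p:eqation is cotor}) that $T$ is cotorsion iff every system $g_n=f_n+(n+1)g_{n+1}$ with $f_n\in T$ admits a solution $(g_n)\in T^\NN$, and then \emph{constructs} such solutions by hand. The hypothesis $\phi_n(F)=\phi_n(G_\omega)$ is used, via Lemma~\ref{l:reisha e}, to choose representatives $\bar f_n\in G_\omega$ of each $f_n$ with $\phi_l(\bar f_n)=e$ for $l<n$; one then defines $\bar g_{n,l}\in H_l$ by the downward recursion $\bar g_{n,l}=\phi_l(\bar f_n)\,\bar g_{n+1,l}^{\,n+1}$ (starting from $\bar g_{n,l}=e$ for $n>l$), checks that the sequences $(\bar g_{n,l})_l$ are coherent and hence define elements $\bar g_n\in G_\omega$, and reads off $g_n=f_n+(n+1)g_{n+1}$ in $T$. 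No $\lim^1$ machinery, no abelian-tower identification, and no Mittag-Leffler assumption is needed.
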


Now suppose that we are given a countable collection of groups $(H_n)_{n\in\NN}$. Then we can construct from them a diagram $G:\NN\lrar \Grp$ by letting $G_n$ be the product $H_1\times\cdots\times H_n,$
for every $n\in\NN$, and $G_{m}\to G_n$ be the natural projection, for every $m\geq n$.
Since for every $n\in\NN$ we have
$$\Ab(\prod_{i\leq n}H_i)\cong\prod_{i\leq n}\Ab(H_i),$$
we see that the natural map in Theorem \ref{t:main cotor} is the natural map
$$\Ab(\prod_{i\in\NN}H_i)\lrar \prod_{i\in\NN}\Ab(H_i).$$

Clearly all the structure maps in the diagram $G$ are surjective, so by Theorem \ref{t:main cotor} we have that the natural map is surjective, and its kernel is a cotorsion group. But in this case we are able to say more about the kernel of the natural map. Namely, we show
\begin{thm}[see Theorem \ref{t:Ulm}]\label{t:main Ulm}
 Let $(H_n)_{n\in\NN}$ be a countable collection of groups. Then the natural map
$$\Ab(\prod_{i\in\NN}H_i)\lrar \prod_{i\in\NN}\Ab(H_i)$$
is surjective, and its kernel is a cotorsion group of Ulm length that does not exceed $\aleph_1$.
\end{thm}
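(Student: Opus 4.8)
The plan is to reduce to a concrete abelian model and then run a transfinite analysis of its Ulm filtration. As in the discussion preceding the theorem, take $G_n=H_1\times\cdots\times H_n$ with the projections as connecting maps; then $G_\omega:=\lim_n G_n=\prod_i H_i$, all connecting maps are split surjections, and the natural map of the theorem is $\Ab(G_\omega)\lrar\prod_i\Ab(H_i)$. By Theorem~\ref{t:main cotor} this map is onto with cotorsion kernel $K$, so only the Ulm bound remains. Writing $N:=\prod_i[H_i,H_i]\triangleleft G_\omega$ and using $[G_\omega,G_\omega]\le N$ gives $K\cong N/[G_\omega,G_\omega]$, and since $[N,N]\le[G_\omega,G_\omega]$ this is already abelian: $K\cong\bar N/B$, where $\bar N:=\prod_i D_i$ with $D_i:=\Ab([H_i,H_i])$, and $B:=\bigcup_{k\ge1}\prod_i T_i^{(k)}$, with $T_i^{(k)}\subseteq D_i$ the image of the set of elements of $[H_i,H_i]$ that are products of at most $k$ commutators in $H_i$. (One uses that a product of at most $k$ commutators in $\prod_i H_i$ is exactly a tuple $(c_i)_i$ whose entries are products of at most $k$ commutators in the respective $H_i$.) The $T_i^{(k)}$ contain $0$, are symmetric, increase with $k$ to $D_i$, and satisfy $T_i^{(k)}+T_i^{(\ell)}\subseteq T_i^{(k+\ell)}$ and $mT_i^{(k)}\subseteq T_i^{(mk)}$. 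One can alternatively obtain this model, and re-derive cotorsionness, by applying Theorem~\ref{t:equation gen 0} to the tower $([G_n,G_n])_n$ together with the subgroup $[G_\omega,G_\omega]$.

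Next I would unwind the Ulm filtration of $K=\bar N/B$, writing $K^{(\alpha)}=B_\alpha/B$ for an increasing chain $B=B_0\subseteq B_\alpha\subseteq\bar N$ with $B_\lambda=\bigcap_{\alpha<\lambda}B_\alpha$ at limits and $x\in B_{\alpha+1}$ iff for every $m\in\NN$ there are $y\in B_\alpha$ and $k\in\NN$ with $x_i-my_i\in T_i^{(k)}$ for all $i$. Subadditivity of the $T_i^{(k)}$ makes the width functions $w_{m,i}(d):=\min\{k:d\in mD_i+T_i^{(k)}\}$ (for $d\in D_i$) well behaved and yields the base case $\bar x\in mK\iff\sup_i w_{m,i}(x_i)<\infty$ (hence, for instance, $\bar x\in p^\omega K\iff\sup_i w_{p^n,i}(x_i)<\infty$ for all $n$, for any prime $p$). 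The goal is to show that this recursion stabilizes at stage $\omega_1$, i.e. that $B_{\omega_1}$ is divisible, equivalently $B_{\omega_1}=B_{\omega_1+1}$.

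To prove stabilization at $\omega_1$, take $x\in B_{\omega_1}$ and $m\in\NN$; the task is to find $y\in B_{\omega_1}$ with $x-my\in B$. For each $\alpha<\omega_1$, using $x\in B_{\alpha+1}$, choose $(y_\alpha,k_\alpha)$ with $x_i-m(y_\alpha)_i\in T_i^{(k_\alpha)}$ for all $i$ and $y_\alpha\in B_\alpha$; by pigeonhole the $k_\alpha$ are constant, say $=k^*$, on a cofinal set $S\subseteq\omega_1$. Symmetry and subadditivity force that for $\alpha,\beta\in S$ the difference $y_\alpha-y_\beta$ lies coordinatewise in $\{d:md\in T_i^{(2k^*)}\}$, so multiplication by $m$ sends it into $B$; hence the classes $\overline{y_\alpha}$ all lie in a single coset of the bounded group $K[m]$, while $\overline{y_\alpha}\in K^{(\alpha)}$. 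One is therefore reduced to showing that a coherent, cofinal-in-$\omega_1$ family of nonempty cosets of the subgroups $K^{(\alpha)}\cap K[m]$ of $K[m]$ has a common point. The reason this should hold — and the crux of the argument — is that the subgroups $K^{(\alpha)}$ are cut out by conditions involving only the countably many coordinates $i\in\NN$ (through the width functions and their iterates), so the "refinement data" at successive Ulm stages is countably supported; combined with the fact that a countable supremum of countable ordinals is countable, this forces the Ulm stages to stop improving by stage $\omega_1$, and the direct-product shape of $\bar N$ and $B$ then permits assembling the desired $y$ coordinate by coordinate.

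The main obstacle is exactly this last point. For a general inverse system over $\omega_1$ the analogous "complete a coherent family" problem is a transfinite ${\lim}^1$ that need not vanish — it fails already for suitable decreasing chains of subgroups of sufficiently large bounded groups — so the argument must genuinely exploit the countable product structure of $\prod_i H_i$, as opposed to a general countable Mittag-Leffler tower: it is precisely this that makes the invariant of $x\in\bar N$ tracked by the Ulm filtration "countable" in nature, and hence stabilizing by $\aleph_1$. Identifying that invariant and bounding its refinement length is where essentially all the work beyond the formal reductions is concentrated.
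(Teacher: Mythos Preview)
Your reductions to the abelian model $K\cong\bar N/B$ with $\bar N=\prod_i D_i$ and $B=\bigcup_k\prod_i T_i^{(k)}$ are correct and match the paper's setup. The gap is precisely where you locate it: you propose to show $B_{\omega_1}$ is divisible by finding, for each $x\in B_{\omega_1}$ and $m$, a single $y\in B_{\omega_1}$ with $x-my\in B$, via a common point of a cofinal-in-$\omega_1$ family of cosets of $K[m]$. You correctly observe that this is a ${\lim}^1$-type problem over $\omega_1$ which need not vanish abstractly, and your ``countable support'' heuristic does not by itself produce a $y$; nothing in your outline actually constructs one. So as written the argument is incomplete at its acknowledged crux.

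The paper sidesteps this trap by two moves you do not make. First, it uses that $K$ is cotorsion to reduce the Ulm bound to a $p$-length bound $l_p(K)\le\aleph_1$ for each prime $p$ separately (via the decomposition of a reduced cotorsion group as $\prod_p K_p$). Second, and this is the decisive idea, it does \emph{not} look for a single $p$-th root of $x$ inside $p^{\aleph_1}K$; instead it shows $x$ lies in the maximal $p$-divisible subgroup $p^{l_p(K)}K$ by exhibiting an entire chain $x=[h_0],[h_1],[h_2],\dots$ with $[h_m]=p[h_{m+1}]$. To build the $h_m$ it pigeonholes not over a single family $(y_\alpha,k_\alpha)_{\alpha<\omega_1}$ but over the whole tree $\des(\aleph_1)$ of finite strictly descending sequences below $\aleph_1$, stabilizing the commutator-length data level by level; this yields for each coordinate $n\in\NN$ a descending chain $\mu(n)$ of length $n+1$ with prescribed widths, and one then \emph{diagonalizes}: $h_m(n)$ is the $n$-th coordinate of the representative at depth $m$ along $\mu(n)$. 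Because both the coordinate index and the depth are natural numbers, this diagonal makes sense, and the equalities $h_m(n)^{-1}h_{m+1}(n)^p\in$ (product of $k_m$ commutators) hold for all $n\ge m+1$, hence $[h_m]=p[h_{m+1}]$ in $K$. This is exactly where the countable-product structure enters, and it replaces the intersection-of-cosets step that your outline leaves open.
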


For a definition of the Ulm length of an abelian group see Definition \ref{d:Ulm}.

\begin{rem}
  A generalization of Theorem \ref{t:main Ulm} to arbitrary  products, as well as the question of whether our bound on the Ulm length of the kernel of the map in Theorem \ref{t:main Ulm} is strict or can be improved, will be addressed in future papers.
\end{rem}

\begin{rem}
It is not hard to find examples where the natural map in Theorem \ref{t:main Ulm} (and in particular, in Theorem \ref{t:main cotor}) is not an isomorphism. We explain in remark \ref{r:example} that if $G$ is any group, the natural map
$$\Ab(G^\NN)\lrar \Ab(G)^\NN$$
is an isomorphism if and only if the commutator width of $G$ is finite. This gives a lot of examples where this map is not an isomorphism. For instance, one can take $G$ to be a free group on more then one generator.
\end{rem}

Again, we obtain the immediate corollary:
\begin{cor}
  The abelianization of a countable product of perfect groups is a cotorsion group of Ulm length that does not exceed $\aleph_1$.
\end{cor}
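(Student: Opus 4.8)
The plan is to combine the concrete description of the kernel recorded above with a structural analysis of that kernel as a cotorsion group. Write $P=\prod_{i\in\NN}H_i$, $K_n=\prod_{i>n}H_i$, $D=[P,P]$ and $N=\prod_i[H_i,H_i]$. As noted before the statement, with the tower $G_n=H_1\times\cdots\times H_n$ — whose connecting maps are surjective, hence satisfy the Mittag--Leffler condition — Theorem~\ref{t:main cotor} already gives that $\Ab(P)\lrar\prod_i\Ab(H_i)$ is onto with cotorsion kernel; since $\Ab(P)=P/D$, $\prod_i\Ab(H_i)=P/N$ and $D\subseteq N$, that kernel is $C:=N/D$. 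First I would record two features of $C$. (i) Expanding a product of $m$ commutators coordinate by coordinate shows $D=\{(c_i)\in P:\sup_i\mathrm{cw}_i(c_i)<\infty\}$, where $\mathrm{cw}_i$ denotes commutator width in $H_i$; hence $N$ is the closure of $D$ in the product topology on $P$ (the one with the subgroups $K_n$ as a neighbourhood basis of $1$), and, feeding the tower of short exact sequences $1\to[K_n,K_n]\to D\to[G_n,G_n]\to1$ (with $[K_n,K_n]=D\cap K_n$ and $D$ constant in $n$) into the $\lim$--${\lim}^1$ exact sequence, one obtains $C\cong{\lim}^1_n[K_n,K_n]$. (ii) The splitting $P=(H_1\times\cdots\times H_n)\times K_n$ yields $\Ab(P)\cong\prod_{i\le n}\Ab(H_i)\oplus\Ab(K_n)$, under which $C$ corresponds to $\ker(\Ab(K_n)\to\prod_{i>n}\Ab(H_i))$; in particular $C$ depends only on the tail of the sequence $(H_i)_i$.

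Next I would reduce the bound on the Ulm length (Definition~\ref{d:Ulm}) of $C$ to a statement about its torsion subgroup. By Harrison's structure theorem \cite{Har}, every cotorsion group is the direct sum of a divisible group, a reduced torsion-free algebraically compact group, and an ``adjusted'' part of the form $\Ext(\QQ/\ZZ,T)$ for a uniquely determined reduced torsion group $T$; the divisible summand has Ulm length $0$, the algebraically compact summand has Ulm length $\le1$ (a $p$-adic completion is $p$-adically Hausdorff), and the Ulm length of $\Ext(\QQ/\ZZ,T)$ exceeds that of $T$ by at most a bounded amount. Since $T$ is, up to a divisible summand, the torsion subgroup $t(C)$ of $C$, the theorem follows once one shows that \emph{the reduced torsion subgroup of $C=N/D$ has Ulm length a countable ordinal}. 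One works here with the explicit description $t(C)=L/D$, where $L=\bigcup_m L_m$ and $L_m=\{(c_i)\in N:(c_i^m)_i\in D\}$, each $L_m$ being a subgroup of $N$ by a Hall--Petrescu-type bound on the number of commutators that occur when an $m$-th power is expanded.

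The heart of the argument — and the only place where the countability of the index set $\NN$ is genuinely used — is this last italicised claim. My approach would be to exhibit the reduced torsion subgroup $T$ of $C$ as a direct sum of countable groups: working one prime at a time and using the coordinate decomposition $N=\prod_i[H_i,H_i]$ together with the fact that $D$ consists of the \emph{uniformly} (and not merely coordinatewise) width-bounded sequences, one peels countable direct summands off $L_m/D$, the point being that any single element of $T$ involves only countably many coordinates in an essential way. A reduced torsion group which is a direct sum of countable groups has countable Ulm length (each summand has countable length by Ulm's theorem, and a countable supremum of countable ordinals is countable), and hence so do $\Ext(\QQ/\ZZ,T)$ and $C$, which gives Ulm length $\le\aleph_1$. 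An alternative, probably more robust, route to the same claim is by reflection: were $C^{(\aleph_1)}$ — the $\aleph_1$-st Ulm subgroup — not divisible, there would be a prime $p$ and an element $x\in C^{(\aleph_1)}$ with $x\notin pC^{(\aleph_1)}$; since nevertheless $x\in C^{(\alpha+1)}\subseteq pC^{(\alpha)}$ for every $\alpha<\aleph_1$, there is a coset of $C[p]$ meeting $C^{(\alpha)}$ for all $\alpha<\aleph_1$, and — using the completeness inherent in $C$ being cotorsion together with the fact that the relevant data is carried by a countable structure — one shows this coset already meets $C^{(\aleph_1)}=\bigcap_{\alpha<\aleph_1}C^{(\alpha)}$, i.e.\ $x\in pC^{(\aleph_1)}$, a contradiction; thus $C^{(\aleph_1)}$ is divisible and the Ulm length is $\le\aleph_1$.

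The main obstacle is precisely this italicised claim. A naive coordinatewise analysis is doomed, because the groups $[H_i,H_i]$ are arbitrary (possibly large) abelian groups and the Ulm length of the quotient $N/D$ is not controlled by theirs; what rescues the situation — and forces the Ulm filtration to stabilise at a countable stage, unlike for a general cotorsion group, or indeed for a general ${\lim}^1$ of a tower of abelian groups (cf.\ \cite{WaHu}) — is the interplay between the \emph{uniformity} of the width bound defining $D$ and the \emph{countability} of the index set $\NN$, and it is making this interplay effective that constitutes the real work.
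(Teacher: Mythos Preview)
Your proposal is not a proof of the corollary as such but an attempted proof of the underlying Theorem~\ref{t:main Ulm}; in the paper the corollary is immediate from that theorem (for perfect $H_i$ one has $\Ab(H_i)=0$, so the kernel of $\rho$ is all of $\Ab(\prod H_i)$). The cotorsion part you obtain correctly from Theorem~\ref{t:main cotor}, and your identification $C\cong N/D=\prod_i[H_i,H_i]\big/[\prod_i H_i,\prod_i H_i]$ agrees with the paper. The substantive issue is the Ulm-length bound, where your route diverges from the paper's and, as you yourself flag, remains incomplete.

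Your strategy --- reduce via Harrison's decomposition to showing that the reduced torsion subgroup $T$ of $C$ has countable Ulm length, and then argue that $T$ is a direct sum of countable groups --- has a genuine gap at the italicised step. Neither ``peeling off countable direct summands of $L_m/D$'' nor the reflection sketch is carried out, and there is no evident reason why $T$ should decompose this way: the coordinate groups $[H_i,H_i]$ are arbitrary, and nothing you have written isolates the countable structures you want. The observation that ``any single element of $T$ involves only countably many coordinates'' is vacuous here since the index set is already $\NN$. In the reflection variant, the step ``using the completeness inherent in $C$ being cotorsion \ldots\ one shows this coset already meets $C^{(\aleph_1)}$'' is exactly the difficulty: cotorsion alone does not bound Ulm length (any torsion group occurs as the torsion part of some cotorsion group), so the argument must use the specific presentation of $C$, and you have not shown how.

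The paper's argument is entirely different and bypasses Harrison's theorem. It works one prime at a time and shows directly that $p^{\aleph_1}S\subseteq p^{l_p(S)}S$ (Theorem~\ref{t:lenght}), then converts $p$-length bounds into an Ulm-length bound via the $\ZZ_p$-module decomposition of a reduced cotorsion group (Propositions~\ref{p:u l p} and \ref{p:u max l p}). The key device is combinatorial: given $x\in p^{\aleph_1}S$, one first unfolds the statement $x\in p^{\aleph_1}S$ into a family $(x_\mu)_{\mu\in\des(\aleph_1)}$ of elements indexed by finite strictly decreasing sequences of countable ordinals, with $px_\mu=x_{\mu|_{n-1}}$ (Proposition~\ref{p:des}). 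Each relation $px_\mu=x_{\mu|_{n-1}}$ is witnessed, after lifting to $\prod_n\C(H_n)$, by a product of $k_\mu$ commutators for some $k_\mu\in\NN$. One then applies a pigeonhole/thinning argument (Proposition~\ref{p:des logic}) --- using only that $\aleph_1>|\NN|$ --- to extract, for each $n$, sequences $\mu(n)_\alpha$ of length $n+1$ along which the commutator counts $k_{\mu(n)_\alpha|_{l+1}}$ are \emph{constant} in $\alpha$. From these one assembles explicit elements $h_m\in\prod_n\C(H_n)$ with $[h_m]=p[h_{m+1}]$ and $[h_0]=x$, whence $x\in p^{l_p(S)}S$ by Proposition~\ref{p:p-divisible}. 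This is precisely where the interplay you allude to --- uniform commutator bounds versus a countable index set --- is made effective; your proposal names the ingredients but does not supply the mechanism.
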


This paper originated from a question posed to the second author by Emmanuel Farjoun, from the field of algebraic topology. For the following discussion, the word space will mean a compactly generated Hausdorff topological space.
It is known, that the homology groups of a homotopy \textbf{colimit} of spaces are computable from the homology groups of the individual spaces, by means of a spectral sequence, while this is not true for the homology groups of a homotopy \textbf{limit}. Farjoun asked what can be said about the natural map from the homology of a homotopy limit to the limit of homologies. Since the \textbf{homotopy} groups of a homotopy limit are computable from the homotopy groups of the individual spaces, by means of a spectral sequence, and we have a natural isomorphism
$$H_1(X)\cong\Ab(\pi_1 X),$$
for every pointed connected space $X$ (see \cite[Corollary 3.6]{GJ}), a good place to start seems to be the investigation of the behaviour of the abelianization functor under limits. And indeed, using our results, we can say something also about Farjoun's question as the following corollary demonstrates:

\begin{cor}\
\begin{enumerate}
  \item Let $X:\NN\lrar\Top_\ast$ be a diagram of pointed connected spaces. Suppose that for every $n\in\NN$ the structure map $X_{n+1}\to X_n$ is a Serre fibration and both $\pi_1 X$ and $\pi_2 X$ satisfy the Mittag-Leffler condition.
   Then the natural map
$$H_1(\lim_{n\in\NN}X_n)\lrar\lim_{n\in\NN}H_1(X_n)$$
is surjective, and its kernel is a cotorsion group.

  \item Let $(Y_n)_{n\in\NN}$ be a countable collection of connected spaces. Then the natural map
$$H_1(\prod_{i\in\NN}Y_i)\lrar \prod_{i\in\NN}H_1(Y_i)$$
is surjective, and its kernel is a cotorsion group of Ulm length that does not exceed $\aleph_1$.
\end{enumerate}
\end{cor}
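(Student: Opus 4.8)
The plan is to make the kernel completely explicit, read off surjectivity, deduce the cotorsion property from the results already established, and then carry out a transfinite analysis of the Ulm filtration.

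\textbf{Identifying the kernel.} Put $P=\prod_{i\in\NN}H_i$. Since $\Ab$ commutes with finite products, $\prod_i\Ab(H_i)$ is identified with $P/\prod_i[H_i,H_i]$ and the natural map with the canonical projection $P/[P,P]\lrar P/\prod_i[H_i,H_i]$ (well defined because $[P,P]\subseteq\prod_i[H_i,H_i]$). In particular it is surjective, and its kernel is
$$K\;:=\;\Big(\prod_{i\in\NN}[H_i,H_i]\Big)\big/[P,P].$$
Writing $c_k(G)$ for the set of products of at most $k$ commutators in a group $G$, one has $[P,P]=\bigcup_{k}\prod_i c_k(H_i)$, since a tuple $(x_i)_i$ is a product of $k$ commutators in $P$ precisely when every $x_i$ is, uniformly in $i$, a product of $k$ commutators in $H_i$. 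Thus $K=D/C$ with $D=\prod_i[H_i,H_i]$ and $C=\bigcup_k C_k$, $C_k:=\prod_i c_k(H_i)$; here $C\triangleleft D$ and $[D,D]\subseteq C$, so $K$ is abelian as expected.

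\textbf{Surjectivity and cotorsion.} Surjectivity is recorded above. For the cotorsion property I would apply Theorem~\ref{t:main cotor} to the tower $G\colon\NN\lrar\Grp$ with $G_n=\prod_{i\le n}H_i$ and the projections as connecting maps: these are surjective, hence $G$ satisfies the Mittag-Leffler condition; moreover $\lim_n G_n=P$ and $\Ab(G_n)\cong\prod_{i\le n}\Ab(H_i)$, so $\lim_n\Ab(G_n)=\prod_i\Ab(H_i)$, and the natural map of Theorem~\ref{t:main cotor} is exactly our map. Hence $K$ is cotorsion.

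\textbf{The Ulm bound.} It remains to bound the Ulm length of $K=D/C$ by $\aleph_1$. I would present the Ulm filtration inside $D$: set $D^{[0]}=D$, $D^{[\alpha+1]}=\bigcap_{n\ge1}\big(D^{[\alpha]}\big)^{\{n\}}\!\cdot C$ where $S^{\{n\}}:=\{s^n:s\in S\}$, and $D^{[\lambda]}=\bigcap_{\alpha<\lambda}D^{[\alpha]}$ at limits. Each $D^{[\alpha]}$ is the full preimage in $D$ of the subgroup $K^{(\alpha)}\le K$, hence a subgroup of $D$ containing $C$, the chain descends, $D^{[\alpha]}/C=K^{(\alpha)}$, and the Ulm length of $K$ is the least $\alpha$ with $D^{[\alpha]}=D^{[\alpha+1]}$; one must bound this by $\omega_1$. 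The mechanism is that each $D^{[\alpha]}$ is described, up to choosing auxiliary sequences coordinatewise, by countable data: membership $x=(x_i)_i\in D^{[\alpha]}$ unfolds into a countably indexed (length $<\omega_1$) scheme of quantifiers over $k,n\in\NN$, over $i\in\NN$, and over tuples in $D$, applied to the sets $c_k(H_i)$ and their images under $g\mapsto g^n$. Reflecting this scheme — via an elementary submodel containing countable portions of the data $(H_i,(c_k(H_i))_k)_i$, or, alternatively, via the reformulation $K\cong{\lim}^1\big\{[\prod_{i>n}H_i,\prod_{i>n}H_i]\big\}_n$ (a tower with inclusions as structure maps, for which $D$ is the completion $\widehat{[P,P]}$ of $[P,P]$ for the filtration given by these subgroups, and $K=\widehat{[P,P]}/[P,P]$) — the countability of the index set $\NN$ forces the descending chain $\{D^{[\alpha]}\}$ to stabilise at some ordinal $\le\omega_1$. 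Hence $K^{(\omega_1)}=K^{(\omega_1+1)}$, i.e. the Ulm length of $K$ does not exceed $\aleph_1$, and the corollary about products of perfect groups is then immediate.

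\textbf{The main obstacle.} The difficulty is concentrated entirely in the stabilisation step. At a single coordinate $i$ the subsets of $[H_i,H_i]$ induced by $D^{[\alpha]}$ may keep shrinking for as many as $|[H_i,H_i]|^{+}$ steps, so the bound can come only from the interaction of the countably many coordinates; the delicate point is to guarantee that the sequences witnessing membership in successive $D^{[\alpha]}$ can be kept inside the reflected (countable) data — equivalently, that the relevant ${\lim}^1$ computation can be localised to the countably many levels of the tower. By contrast the identification of $K$, the surjectivity, the presentation of the Ulm filtration by $\{D^{[\alpha]}\}$, and the verification that $D^{[\alpha]}/C=K^{(\alpha)}$ are all routine.
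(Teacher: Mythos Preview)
Your proposal does not address the stated corollary. The statement is about towers and products of \emph{pointed connected spaces}, and the conclusion is about first homology $H_1$; nowhere in your write-up do spaces, $\pi_1$, $\pi_2$, Serre fibrations, or the Hurewicz identification $H_1(X)\cong\Ab(\pi_1 X)$ appear. You have set $P=\prod_i H_i$ for unspecified groups $H_i$ and proceeded to reprove (parts of) Theorem~\ref{t:main Ulm}; the closing line ``the corollary about products of perfect groups is then immediate'' confirms you are aiming at a different target. Part~(1) of the corollary --- the tower of fibrations with Mittag--Leffler hypotheses on $\pi_1 X$ and $\pi_2 X$ --- is not touched at all.

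What the paper actually does is a short topological reduction. For (1), the Milnor $\lim^1$ sequence for a tower of pointed fibrations
\[
\ast\lrar{\lim}^1_n\,\pi_{i+1}X_n\lrar\pi_i\big(\lim_n X_n\big)\lrar\lim_n\pi_i X_n\lrar\ast
\]
is applied with $i=0$ (using Mittag--Leffler on $\pi_1 X$ to get $\lim^1\pi_1 X_n=\ast$, hence $\lim_n X_n$ is connected) and with $i=1$ (using Mittag--Leffler on $\pi_2 X$ to get $\pi_1(\lim_n X_n)\cong\lim_n\pi_1 X_n$). Then Hurewicz gives $H_1(\lim_n X_n)\cong\Ab(\lim_n\pi_1 X_n)$ and $H_1(X_n)\cong\Ab(\pi_1 X_n)$, and one simply quotes Theorem~\ref{t:main cotor}. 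Part~(2) is the same reduction in the product case, quoting Theorem~\ref{t:main Ulm}. The Mittag--Leffler hypothesis on $\pi_2 X$ is not decorative: without it $\pi_1(\lim_n X_n)$ need not agree with $\lim_n\pi_1 X_n$, and your proposal offers no substitute.

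Separately, even as an attempt at Theorem~\ref{t:main Ulm} your Ulm-bound paragraph is not a proof: you yourself flag the stabilisation step as the ``main obstacle'' and give only a heuristic (reflection in an elementary submodel, or a $\lim^1$ reformulation) without carrying it out. The paper's argument is quite different and concrete: it works prime by prime, uses the tree $\des(\aleph_1)$ of strictly decreasing finite sequences of countable ordinals together with a pigeonhole extraction (Proposition~\ref{p:des logic}) to find, for any $x\in p^{\aleph_1}K$, an explicit sequence $(h_m)$ with $[h_m]=p[h_{m+1}]$ and $[h_0]=x$, whence $x$ lies in the maximal $p$-divisible subgroup; the Ulm bound then follows from the structure of reduced cotorsion groups. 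None of this machinery appears in your sketch. In the context of the present corollary, however, the right move is not to reprove that bound at all but to invoke Theorem~\ref{t:main Ulm} after the topological reduction.
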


\begin{proof}
We begin with (1). Since $X$ is a tower of pointed fibrations, we have for every $i\geq 0$ an exact sequence (see, for instance, \cite[VI Proposition 2.15]{GJ})
   $$*\lrar {\lim}_{n\in\NN}^{1}\pi_{i+1}X_n \lrar \pi_{i}\lim_{n\in\NN} X_n \lrar \lim_{n\in\NN}\pi_{i}X_n \lrar *.$$
   Since $\pi_1 X$ and $\pi_2 X$ satisfy the Mittag-Leffler condition, we have that
   $${\lim}_{n\in\NN}^1\pi_{1}X_n\cong {\lim}_{n\in\NN}^1\pi_{2}X_n\cong *.$$
   Thus, from the exact sequence in the case $i=0$ we can deduce that $\lim_{n\in\NN} X_n$ is connected, so we have a natural isomorphism
   $$H_1(\lim_{n\in\NN} X_n)\cong\Ab(\pi_1\lim_{n\in\NN} X_n),$$
   while from the exact sequence in the case $i=1$ we obtain a natural isomorphism
   $$\pi_{1}\lim_{n\in\NN} X_n \cong \lim_{n\in\NN}\pi_{1}X_n.$$
   It is not hard to see that these isomorphisms fit into a commutative diagram
   $$\xymatrix{H_1(\lim_{n\in\NN} X_n)\ar[r]^{\cong}\ar[d] & \Ab(\lim_{n\in\NN} \pi_1 X_n)\ar[d]\\
   \lim_{n\in\NN} H_1(X_n)\ar[r]^{\cong} & \lim_{n\in\NN} \Ab(\pi_1 X_n),}$$
   where the vertical maps are the natural ones. Now the result follows from Theorem \ref{t:main cotor}.

   The proof of (2) is identical to (1), but we use Theorem \ref{t:main Ulm} instead of Theorem \ref{t:main cotor}.
\end{proof}

\subsection{Organization of the paper}
In Section \ref{s:prelim} we recall some necessary background from the theory of groups and abelian groups. In Section \ref{s:seq lim} we prove Theorem \ref{t:main cotor} and in Section \ref{s:prod} we prove Theorem
\ref{t:main Ulm}.

\subsection{Notations and conventions}\label{ss:conv}
We denote by $\NN$ the set of natural numbers (including $0$), by $\ZZ$ the ring of integers and by $\QQ$ the field of rational numbers. We denote by $\PP$ the set of prime natural numbers and for every $p\in\PP$, we denote by $\ZZ_p$ the ring of $p$-adic integers. Whenever we treat a ring as an abelian group we mean its underlying additive group.

If $G$ is a group, we denote its unit element by $\e_G$ or just by $\e$ if the group is understood.

If $\cT$ is a small partially ordered set, we view $\cT$ as a small category which has a single morphism $u\to v$ whenever $u\geq v$.

\subsection{Acknowledgments}
We would like to thank Emmanuel Dror-Farjoun for suggesting the problem and for his useful comments on the paper.
We also thank Manfred Dugas, Brendan Goldsmith, Daniel W. Herden and Lutz Str\"ungmann for helpful correspondences. Finally, we would like to thank the referee for his useful comments and suggestions.

\section{Preliminaries}\label{s:prelim}
In this section we recall some necessary background from the theory of groups and abelian groups. The material is taken mainly from \cite{Fu1}, \cite{GJ} and \cite{Wei}. We assume basic familiarity with category theory \cite{ML}.

\subsection{The abelianization functor}\label{ss:comm}
Let $G$ be a group. Recall that the \emph{commutator} of two elements $x$ and $y$ of $G$ is
$$[x,y]:=x^{-1}y^{-1}xy\in G.$$
The \emph{commutator subgroup} of $G$, denoted $\C(G)$, is the subgroup of $G$ generated by the commutators. It is easily seen that $\C(G)$ is a normal subgroup of $G$ and we have
$$\C(G)=\{[x_1,y_1]\cdots [x_n,y_n]\:\:|\:\:n\in \NN,\:\:x_i,y_i\in G\}.$$

The \emph{commutator length} of an element $x$ in $\C(G)$ is the least integer $n$, such that $x$ is a product of $n$ commutators in $G$. The \emph{commutator width} of $G$ is the supremum of the commutator lengths of the elements in $\C(G)$ (which is either a natural number or $\infty$).

If $f:G\to H$ is a group homomorphism, then
$$f|_{\C(G)}:\C(G)\lrar\C(H),$$
so $\C$ can be naturally extended to a functor from the category of groups to itself
$$\C:\Grp\lrar\Grp.$$

The abelianization of $G$ is defined to be the quotient group
$$\Ab(G):=G/\C(G).$$
Clearly $\Ab(G)$ is an abelian group and if $f:G\to H$ is a group homomorphism, we have an induced map
$$\Ab(f):\Ab(G)\lrar\Ab(H).$$
This turns $\Ab$ into a functor from the category of groups to the category of abelian groups
$$\Ab:\Grp\lrar\AbC.$$
We have a projection map
$$\pi_G:G\lrar\Ab(G),$$
and this map is natural in $G$ in the sense that it defines a natural transformation of functors from groups to groups
$$\pi:\Id_{\Grp}\lrar\Ab.$$


\subsection{The Mittag-Leffler condition and $\lim^1$}\label{ss:lim}

Let $\cT$ be a poset. Recall that according to our convention (see Section \ref{ss:conv}), we view $\cT$ as a small category which has a single morphism $t\to s$ whenever $t\geq s$.
Recall also that $\cT$ is called directed if for every $t,s\in \cT$ there exists $r\in \cT$ such that $r\geq t$ and $r\geq s$.

\begin{define}\label{d:ML}
  Let $\cT$ be a directed poset and let $G:\cT\lrar\Grp$ be a diagram of groups. Then $G$ is said to satisfy the Mittag-Leffler condition if for every $t\in\cT$ there exists $s\geq t$ such that for every $r\geq s$ we have
  $$\im (G_s\to G_t)=\im (G_r\to G_t).$$
\end{define}

Clearly every directed diagram of groups with surjective connecting homomorphisms satisfies Mittag-Leffler condition, as does every directed diagram of finite groups.

Let $G:\cT\lrar\Grp$ be a directed diagram of groups. For every $t\in \cT$ we define
$$G'_t:=\bigcap_{s\geq t}\im (G_s\to G_t)< G_t.$$
Clearly, by restriction of the structure maps, we can lift $G'$ into a diagram $G':\cT\lrar \Grp$. It is not hard to see that
we have a natural isomorphism $\lim G'\cong\lim G$. If $G$ satisfies the Mittag-Leffler condition then all the structure maps of $G'$ are surjective.

Let $G:\NN\lrar \Grp$ be a diagram of shape $\NN$ in the category of groups. Such a diagram will be called a \emph{tower of groups}. For every $n\in\NN$ we have a unique morphism $n+1\to n$ in $\NN$, and we define
$$g_{n+1}:=G(n+1\to n):G_{n+1}\to G_n.$$

Let $\lim G$ denote the following subgroup of the product group:
$$\lim G:=\{(x_n)\in\prod_{n=0}^\infty G_n\:\:|\:\:\forall n\in\NN \:\:.\:\:g_{n+1}(x_{n+1})=x_{n}\}.$$
Restricting the natural projections
$$\prod_{m=0}^\infty G_m\lrar G_n,$$
we obtain maps $\lim G\lrar G_n$, for every $n\in \NN$.
Then $\lim G$, together with the maps $\lim G\lrar G_n$, is a limit of the diagram $G$ in the category of groups $\Grp$.

If for every $n\in\NN$ the group $G_n$ is abelian, that is, if $G:\NN\lrar\AbC$, then clearly $\lim G$ is also abelian. In fact, $\lim G$, together with the maps $\lim G\lrar G_n$, is a limit of $G$ in the category of \textbf{abelian} groups $\AbC$.

\begin{define}
  We define an equivalence relation $\sim$ on the \textbf{pointed set} $\prod_{n=0}^{\infty}G_n$ by letting $(x_n)\sim(y_n)$ iff there exist $(a_n)\in \prod_{n=0}^{\infty}G_n$ such that
  $$(y_n)=(a_n x_n g_{n+1}(a_{n+1})^{-1}).$$
  We now define $\lim^1 G$ to be the quotient pointed set
  $${\lim}^1 G:=\prod_{n=0}^{\infty}G_n/\sim.$$
\end{define}

It is not hard to see that $\lim^1$ can be lifted to a functor from the category of towers of groups to the category of pointed sets.

\begin{rem}\label{r:lim1 Ab}
If for every $n\in\NN$ the group $G_n$ is abelian, that is, if $G:\NN\lrar \AbC$, then $\lim G$ and $\lim^1 G$ are naturally identified with the kernel and cokernel of the homomorphism
$$\prod_{n=0}^{\infty}G_n\xrightarrow{\partial} \prod_{n=0}^{\infty}G_n,$$
defined by
$$(a_n)\mapsto(a_n-g_{n+1}(a_{n+1})).$$
In particular, in this case $\lim^1 G$ is an abelian group (and not just a pointed set). Furthermore, $\lim^1$ is a functor from the category of towers of abelian groups to the category of abelian groups.
\end{rem}

\begin{thm}[{\cite[Lemma VI.2.12]{GJ}}]\label{t:lim1}
  The functor $\lim^1$ has the following properties:
  \begin{enumerate}
    \item If $0\to G_1\to G_2\to G_3\to 0$ is a short exact sequence of towers of groups, then we have an exact sequence
        $$0\to\lim G_1\to\lim G_2\to\lim G_3\to{\lim}^1 G_1\to{\lim}^1 G_2\to{\lim}^1 G_3\to 0,$$
        where the morphisms (except the middle one) are the ones induced by the functors $\lim$ and $\lim^1$.
    \item If $G$ is a tower of groups that satisfies the Mittag-Leffler condition, then $\lim^1 G=0$.
  \end{enumerate}
\end{thm}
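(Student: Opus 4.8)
The plan is to deduce both parts from the elementary ``Milnor presentation'' of $\lim$ and ${\lim}^1$ of a tower of groups, followed by a diagram chase. \textbf{Part (1).} For any tower of groups $G$, the product group $\prod_{n}G_{n}$ acts on the set $\prod_{n}G_{n}$ by $(a_{n})\cdot(x_{n})=(a_{n}x_{n}g_{n+1}(a_{n+1})^{-1})$, and ${\lim}^{1}G$ is by definition the set of orbits, based at the orbit of $(\e_{n})_{n}$. The orbit map through the basepoint is $\partial_{G}\colon\prod_{n}G_{n}\to\prod_{n}G_{n}$, $(x_{n})\mapsto(x_{n}g_{n+1}(x_{n+1})^{-1})$, and since $\partial_{G}^{-1}((\e_{n})_{n})=\lim G$ while the preimage of the basepoint of ${\lim}^{1}G$ is the orbit $\img\partial_{G}$, one obtains an exact sequence of pointed sets
$$1\to\lim G\to\prod_{n}G_{n}\xrightarrow{\partial_{G}}\prod_{n}G_{n}\to{\lim}^{1}G\to 1 .$$
Now apply this to each tower in a short exact sequence $0\to G_{1}\to G_{2}\to G_{3}\to 0$ of towers. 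Since products preserve exactness, the three rows fit into a diagram whose columns $1\to\prod_{n}(G_{1})_{n}\to\prod_{n}(G_{2})_{n}\to\prod_{n}(G_{3})_{n}\to 1$ are short exact, and a snake-type chase produces the six-term sequence: to build the boundary map $\lim G_{3}\to{\lim}^{1}G_{1}$, lift $(z_{n})\in\lim G_{3}$ to $(y_{n})\in\prod_{n}(G_{2})_{n}$, observe that $\partial_{G_{2}}((y_{n}))$ maps to $\partial_{G_{3}}((z_{n}))=(\e_{n})$ in $\prod_{n}(G_{3})_{n}$ hence comes from a unique $(w_{n})\in\prod_{n}(G_{1})_{n}$, and set $\delta((z_{n})):=[(w_{n})]$. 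Injectivity of $\lim G_{1}\to\lim G_{2}$ comes from $(G_{1})_{n}\hookrightarrow(G_{2})_{n}$, surjectivity of ${\lim}^{1}G_{2}\to{\lim}^{1}G_{3}$ from $\prod_{n}(G_{2})_{n}\twoheadrightarrow\prod_{n}(G_{3})_{n}$, and the remaining exactness is read off the diagram spot by spot.

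\textbf{Part (2).} I would first reduce to the case where every $g_{n+1}$ is surjective. For a Mittag-Leffler tower $G$, put $G'_{n}:=\bigcap_{s\ge n}\img(G_{s}\to G_{n})$; the Mittag-Leffler condition says exactly that these images stabilize, so $G'$ has surjective structure maps, $\lim G'=\lim G$, and the inclusion $G'\hookrightarrow G$ is a pro-isomorphism (for each $n$ pick $m\ge n$ with $\img(G_{m}\to G_{n})=G'_{n}$; the resulting surjection $G_{m}\twoheadrightarrow G'_{n}$ gives an inverse up to reindexing), hence induces a bijection on ${\lim}^{1}$. So assume all $g_{n+1}$ surjective and let $(x_{n})\in\prod_{n}G_{n}$; we must find $(b_{n})$ with $g_{n+1}(b_{n+1})=x_{n}^{-1}b_{n}$ for all $n$ (which rearranges to $(x_{n})$ being equivalent to the basepoint). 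Take $b_{0}:=\e$ and, having chosen $b_{n}$, choose $b_{n+1}$ in the nonempty preimage $g_{n+1}^{-1}(x_{n}^{-1}b_{n})$; this recursion goes through for every $n$, so ${\lim}^{1}G=\ast$.

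The real work is the non-abelian bookkeeping in Part (1). Because the towers need not be abelian, every instance of ``exactness'' in the six-term sequence must be read in the pointed-set sense (preimages of basepoints, orbits) rather than via kernels and cokernels, so nothing follows formally from the abelian snake lemma; one must check by hand that $\delta$ is well defined on orbits (independence of the lift $(y_{n})$, which amounts to a short computation with the action), that $\lim G_{2}\to\lim G_{3}$ is exact into $\delta$, and that the fiber of ${\lim}^{1}G_{1}\to{\lim}^{1}G_{2}$ over the basepoint is exactly $\img\delta$. A lesser technical point in Part (2) is the standard fact that a Mittag-Leffler tower is pro-isomorphic to its tower of stable images and that pro-isomorphisms preserve ${\lim}^{1}$; if one prefers to avoid the pro-category, the same vanishing follows directly by an upward recursion as above, using the Mittag-Leffler indices to absorb the failure of surjectivity of the $g_{n+1}$.
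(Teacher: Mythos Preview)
The paper does not prove this theorem: it is stated in the preliminaries with a citation to \cite[Lemma VI.2.12]{GJ} and no proof is supplied. So there is nothing in the paper to compare your argument against; the authors simply import the result from Goerss--Jardine.

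Your sketch is the standard Milnor/Bousfield--Kan argument and is essentially sound. Two minor points worth tightening if you want it to stand on its own. First, in Part~(1) the non-abelian bookkeeping you flag is real: in particular, well-definedness of $\delta$ requires checking that replacing the lift $(y_n)$ by $(u_n y_n)$ with $(u_n)\in\prod_n(G_1)_n$ changes $(w_n)$ exactly by the $\prod_n(G_1)_n$-action, and exactness at ${\lim}^1 G_1$ needs the observation that $(G_1)_n$ is \emph{normal} in $(G_2)_n$ so that the $\prod_n(G_2)_n$-orbit of a point in $\prod_n(G_1)_n$ meets $\prod_n(G_1)_n$ in a union of $\prod_n(G_1)_n$-orbits. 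Second, in Part~(2) the claim that a pro-isomorphism induces a bijection on ${\lim}^1$ is itself a lemma of roughly the same weight as what you are proving; your fallback (direct recursion using the Mittag--Leffler indices) is the cleaner route and avoids invoking the pro-category machinery.
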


\begin{rem}
  Exactness of the sequence appearing in the conclusion of Theorem \ref{t:lim1} (1) just means that the image of every map equals the inverse image of the next map at the special point. If $0\to G_1\to G_2\to G_3\to 0$ is a short exact sequence of towers of \textbf{abelian} groups, then this exact sequence becomes an exact sequence of abelian groups.
\end{rem}

\subsection{Divisibility and the Ulm length}

\begin{define}
Let $A$ be an abelian group.
\begin{enumerate}
  \item If $n\in\ZZ$, we denote $nA:=\{na\:|\:a\in A\}.$
  \item If $a\in A$ and $n\in\ZZ$, we denote by $n|a$ the statement that $a\in nA$.
  \item We say that $A$ is \emph{divisible} if
  $$\bigcap_{n=1}^\infty nA=A,$$
  that is, if for every $a\in A$ and every $n\geq 1$ we have $n|a$.
\end{enumerate}
\end{define}

\begin{define}\label{d:Ulm}
  Let $A$ be an abelian group. We define, recursively, for every ordinal $\lambda$, a subgroup $ A^\lambda\subseteq A$, called the \emph{$\lambda$th Ulm subgroup} of $A$, by:
  \begin{enumerate}
    \item $ A^0:=A$.
    \item For every ordinal $\lambda$ we define $A^{\lambda+1}:=\bigcap_{n=1}^\infty n A^\lambda$.
    \item For every limit ordinal $\delta$ we define $A^{\delta}:=\bigcap_{\lambda<\delta} A^{\lambda}$.
  \end{enumerate}
  Clearly we have defined a function $A^{(-)}$ from ordinals to sets, that is monotone decreasing and continuous. In particular, $A^{(-)}$ stabilizes, that is, there exists an ordinal $\lambda$ such that $A^{\lambda+1}= A^\lambda$. The smallest such ordinal is called the \emph{Ulm length} of $A$, and is denoted by $u(A)$. We always have $u(A)\leq |A|$.

Clearly, for every ordinal $\lambda$, we have that $ A^\lambda$ is divisible iff $u(A)\leq\lambda$. We also have that $A^{u(A)}$ is the biggest divisible subgroup of $A$, and we denote $D_A:=A^{u(A)}$.
\end{define}

\begin{thm}[{\cite[Theorem 24.5]{Fu1}}]\label{t:inj}
  Let $A$ be an abelian group. Then the following conditions are equivalent:
  \begin{enumerate}
    \item $A$ is divisible.
    \item $A$ is an injective $\ZZ$-module.
    \item $A$ is a direct summand of every group containing $A$.
  \end{enumerate}
\end{thm}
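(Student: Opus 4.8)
The plan is to establish the cyclic chain of implications $(1)\Rightarrow(2)\Rightarrow(3)\Rightarrow(1)$, the only substantive step being the first.

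For $(1)\Rightarrow(2)$, I would prove directly, via Zorn's lemma, that a divisible abelian group $A$ is an injective $\ZZ$-module. Given an inclusion of abelian groups $X\subseteq Y$ and a homomorphism $f\colon X\to A$, consider the poset of pairs $(X',f')$ with $X\subseteq X'\subseteq Y$ and $f'|_X=f$, ordered by extension; the union of a chain, with the induced map, is an upper bound, so Zorn's lemma yields a maximal element $(X',f')$. If $X'\ne Y$, choose $y\in Y\setminus X'$ and set $Z=X'+\ZZ y$. When $\ZZ y\cap X'=0$, extend $f'$ to $Z$ by sending $y$ to $0$; otherwise let $n\ge 1$ be minimal with $ny\in X'$, use divisibility of $A$ to pick $a\in A$ with $na=f'(ny)$, and define $f''(x'+ky)=f'(x')+ka$. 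The point to check is that $f''$ is well defined, which holds because $(k-k')y\in X'$ forces $n\mid(k-k')$, whence $f'((k-k')y)=(k-k')a$. In either case $(Z,f'')$ strictly extends $(X',f')$, contradicting maximality; so $X'=Y$ and $f$ extends to $Y$.

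The implication $(2)\Rightarrow(3)$ is immediate: if $A$ is injective and $A\le B$, applying the extension property to $\id_A\colon A\to A$ along $A\hookrightarrow B$ gives a retraction $r\colon B\to A$, so $B=A\oplus\ker r$. For $(3)\Rightarrow(1)$ I would use the standard embedding of an arbitrary abelian group into a divisible one: writing $A\cong F/R$ with $F$ free abelian, the inclusion $F\hookrightarrow F\otimes_\ZZ\QQ$ descends to an embedding $A=F/R\hookrightarrow(F\otimes_\ZZ\QQ)/R$, and a quotient of a divisible group is divisible. Thus $A$ is a subgroup, hence by hypothesis a direct summand, of a divisible group $D$; since divisibility passes to direct summands, $A$ is divisible.

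The main obstacle is $(1)\Rightarrow(2)$: it is the only place requiring real work, namely the transfinite extension argument together with the well-definedness check for the one-step extension $f''$. Granting the elementary lemma that every abelian group embeds in a divisible group, the remaining implications are formal.
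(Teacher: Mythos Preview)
The paper does not supply its own proof of this theorem: it is quoted as \cite[Theorem 24.5]{Fu1} and used as background. Your argument is correct and is essentially the standard textbook proof (and, in particular, the one given by Fuchs): the Zorn-lemma extension for $(1)\Rightarrow(2)$, the retraction argument for $(2)\Rightarrow(3)$, and embedding into a divisible group for $(3)\Rightarrow(1)$. There is nothing to compare beyond noting that your write-up matches the classical approach.
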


\begin{define}
  Let $A$ be an abelian group. Then $A$ is called \emph{reduced} if $A$ has no divisible subgroups other then $0$.
\end{define}

An easy consequence of Definition \ref{d:Ulm} and Theorem \ref{t:inj} is:

\begin{thm}\label{t:div red}
Let $A$ be an abelian group. Then there exists a reduced subgroup $R_A$ of $A$, unique up to isomorphism, such that
$$A=D_A\oplus R_A.$$
\end{thm}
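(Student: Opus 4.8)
The plan is to derive the statement from Theorem~\ref{t:inj}, which tells us that the divisible group $D_A$ is a direct summand of any abelian group containing it, supplemented by a short argument with projections to pin down divisible subgroups.

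First I would note that, by the discussion following Definition~\ref{d:Ulm}, the group $D_A = A^{u(A)}$ is divisible and is the largest divisible subgroup of $A$. Since $D_A \subseteq A$ is divisible, the implication $(1)\Rightarrow(3)$ of Theorem~\ref{t:inj} shows that $D_A$ is a direct summand of $A$, so we may choose a subgroup $R_A \subseteq A$ with $A = D_A \oplus R_A$. Next I would verify that this complement $R_A$ is reduced: if $E \subseteq R_A$ is a divisible subgroup, then $E$ is in particular a divisible subgroup of $A$, hence $E \subseteq D_A$ by maximality of $D_A$; but $E \subseteq R_A$ and $D_A \cap R_A = 0$, so $E = 0$. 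Thus $R_A$ contains no nonzero divisible subgroup, i.e. $R_A$ is reduced, and the existence half of the theorem is established.

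Finally, for uniqueness up to isomorphism, suppose $A = D \oplus R$ with $D$ divisible and $R$ reduced, and let $p : A \to R$ denote the projection with kernel $D$. On one hand $D$ is a divisible subgroup of $A$, so $D \subseteq D_A$. On the other hand $p(D_A)$ is a homomorphic image of the divisible group $D_A$, hence divisible, and it lies in the reduced group $R$, so $p(D_A) = 0$, i.e. $D_A \subseteq \ker p = D$. Therefore $D = D_A$, and consequently $R \cong A/D = A/D_A \cong R_A$. Hence the reduced complement is unique up to isomorphism (indeed, the divisible summand $D_A$ is literally uniquely determined as a subgroup of $A$).

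I do not anticipate any genuine difficulty here. The only point that calls for a little care is the uniqueness argument: one cannot conclude by simply intersecting $D_A$ with $R$, since the intersection of a divisible subgroup with an arbitrary subgroup need not be divisible; the correct move is to pass to the image $p(D_A)$, which is divisible because it is a quotient of a divisible group, and then use that $R$ is reduced.
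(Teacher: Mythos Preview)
Your argument is correct and is exactly the ``easy consequence of Definition~\ref{d:Ulm} and Theorem~\ref{t:inj}'' that the paper alludes to without spelling out; the paper gives no further proof. If anything, your uniqueness argument proves more than is literally asked (since $D_A$ is fixed, any complement is automatically isomorphic to $A/D_A$), but the extra work showing $D=D_A$ for an arbitrary divisible--reduced decomposition does no harm.
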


The discussion on divisibility and the Ulm length can also be done for every prime separately.

\begin{define}
  Let $A$ be an abelian group and $p$ a prime number. We say that $A$ is \emph{$p$-divisible} if $pA=A,$ that is, if for every $a\in A$ we have $p|a$.
\end{define}

\begin{define}
  Let $A$ be an abelian group and $p$ a prime number. We define, recursively, for every ordinal $\lambda$, a subgroup $p^\lambda A\subseteq A$, by:
  \begin{enumerate}
    \item $p^0 A:=A$.
    \item For every ordinal $\lambda$ we define $p^{\lambda+1}A:=p(p^\lambda A)$.
    \item For every limit ordinal $\delta$ we define $p^{\delta}A:=\bigcap_{\lambda<\delta}p^{\lambda} A$.
  \end{enumerate}
  Clearly we have defined a function $p^{(-)}A$ from ordinals to sets, that is monotone decreasing and continuous. In particular, $p^{(-)}A$ stabilizes, that is, there exists an ordinal $\lambda$ such that $p^{\lambda+1}A=p^\lambda A$. The smallest such ordinal is called the \emph{$p$-length} of A, and is denoted $\l_p(A)$.

Clearly, for every ordinal $\lambda$, we have that $p^\lambda A$ is $p$-divisible iff $l_p(A)\leq\lambda$. We also have that $p^{\l_p(A)} A$ is the biggest $p$-divisible subgroup of $A$.
\end{define}

\subsection{The $\Ext$ functor and cotorsion groups}

\begin{define}
  Let $A$ and $C$ be abelian groups. We denote by $\Ext(C,A)$ the set of equivalence classes of short exact sequences of the form
  $$0\to A\to B\to C\to 0,$$
  where an exact sequence as above is called equivalent to an exact sequence
  $$0\to A\to B'\to C\to 0,$$
  if there exists an isomorphism $B\to B'$ such that the following diagram commutes:
  $$\xymatrix{0\ar[r] & A\ar[r]\ar@{=}[d] & B\ar[r]\ar[d] & C\ar[r]\ar@{=}[d] & 0\\
   0\ar[r] & A\ar[r] & B\ar[r] & C\ar[r] & 0.}$$

   The set $\Ext(C,A)$ can be given the structure of an abelian group by defining
   $$[0\to A\xrightarrow{f} B\xrightarrow{g} C\to 0]+[0\to A\xrightarrow{f'} B'\xrightarrow{g'} C\to 0],$$
   to be
   $$[0\to A\xrightarrow{f\prod f'} B\oplus B'\xrightarrow{g\coprod g'} C\to 0].$$
   The zero element in $\Ext(C,A)$ is the splitting short exact sequence
   $$[0\to A\to A\oplus C\to C\to 0].$$

   It is also possible to lift the above construction to a functor
   $$\Ext:\AbC^{\op}\times\AbC\to \AbC,$$
   using pullbacks and pushouts in the category of abelian groups.
\end{define}

\begin{thm}[{\cite[Application 3.5.10]{Wei}}]\label{t:Ext colim}
    Let $B$ be an abelian group and
    $A:\NN^{\op}\to\AbC$ a diagram of injections of abelian groups:
     $$A_0\hookrightarrow A_1\hookrightarrow\cdots.$$
     Then there is a short exact sequence
     $$0\lrar {\lim}^1_n\Hom(A_n,B)\lrar \Ext(\colim_n A_n,B) \lrar \lim_n\Ext(A_n,B)\lrar 0.$$
  \end{thm}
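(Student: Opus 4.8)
The plan is to deduce the exact sequence from the standard long exact sequence for $\Ext$, applied to the \emph{mapping telescope} presentation of the colimit. Write $A_\infty:=\colim_n A_n$ and $T:=\bigoplus_{n}A_n$, let $\iota_n\colon A_n\hookrightarrow A_{n+1}$ be the structure maps and $j_n\colon A_n\to A_\infty$ the canonical maps to the colimit. The first step is to recall that a colimit of a chain sits in a short exact sequence of abelian groups
$$0\lrar T\xrightarrow{\;1-\sigma\;}T\xrightarrow{\;\pi\;}A_\infty\lrar 0,$$
where $\sigma\colon T\to T$ is the ``shift'' endomorphism whose restriction to the $n$-th summand $A_n$ is $\iota_n\colon A_n\to A_{n+1}$, and where $\pi$ restricts on the $n$-th summand to $j_n$. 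That $1-\sigma$ is injective, that its image is exactly $\ker\pi$, and that $\pi$ is surjective are all elementary verifications (for injectivity: if $(a_0,\dots,a_k,0,\dots)\in T$ is killed by $1-\sigma$, then successively $a_0=0$, $a_1=\iota_0(a_0)=0,\ \dots,\ a_k=0$).

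Next I would apply the contravariant functor $\Ext^\ast(-,B)$. Since $\ZZ$ is a PID, every abelian group has projective dimension $\le 1$, so $\Ext^i(-,-)=0$ for $i\ge 2$, and the long exact sequence of the telescope sequence truncates to
$$0\lrar \Hom(A_\infty,B)\lrar \Hom(T,B)\xrightarrow{\;\partial_0\;}\Hom(T,B)\lrar \Ext(A_\infty,B)\lrar \Ext(T,B)\xrightarrow{\;\partial_1\;}\Ext(T,B)\lrar 0.$$
Now I would invoke the additivity isomorphisms $\Hom(T,B)\cong\prod_n\Hom(A_n,B)$ and $\Ext(T,B)\cong\prod_n\Ext(A_n,B)$; the latter follows by summing free resolutions of the $A_n$ to obtain one of $T$ and observing that $\Hom(-,B)$ carries a direct sum of complexes to the product of the corresponding complexes. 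Under these identifications one checks that both $\partial_0$ and $\partial_1$ become the map $(x_n)_n\mapsto(x_n-\iota_n^\ast(x_{n+1}))_n$, on $\prod_n\Hom(A_n,B)$ and on $\prod_n\Ext(A_n,B)$ respectively, where $\iota_n^\ast$ denotes restriction along $\iota_n$; these are precisely the differentials whose kernels and cokernels compute $\lim$ and ${\lim}^1$ of the towers $\{\Hom(A_n,B)\}_n$ and $\{\Ext(A_n,B)\}_n$, in the sense of Remark \ref{r:lim1 Ab}.

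Finally I would read off the statement from the truncated long exact sequence. Exactness at the two $\Hom$-terms gives $\ker\partial_0=\Hom(A_\infty,B)$ (in particular $\Hom(A_\infty,B)\cong\lim_n\Hom(A_n,B)$, as expected) and exhibits $\coker\partial_0={\lim}^1_n\Hom(A_n,B)$ as a subgroup of $\Ext(A_\infty,B)$ via the connecting map. Exactness at the first $\Ext$-term identifies the image of $\Ext(A_\infty,B)\to\prod_n\Ext(A_n,B)$ with $\ker\partial_1=\lim_n\Ext(A_n,B)$; exactness at the last term, whose successor is $\Ext^2(A_\infty,B)=0$, shows in passing that $\partial_1$ is onto, i.e.\ ${\lim}^1_n\Ext(A_n,B)=0$. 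Splicing the left-hand and right-hand identifications yields the asserted short exact sequence
$$0\lrar {\lim}^1_n\Hom(A_n,B)\lrar \Ext(\colim_n A_n,B)\lrar \lim_n\Ext(A_n,B)\lrar 0.$$

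The only step requiring real care is the identification of $\partial_0$ and $\partial_1$ with the $\lim$/${\lim}^1$ differentials: one must track precisely how the shift $\sigma$ dualizes under $\Hom(-,B)$ and $\Ext(-,B)$ and match the resulting indices and signs with the convention of Remark \ref{r:lim1 Ab}. Everything else is formal; note that the injectivity of the maps $A_n\to A_{n+1}$ is never used, as the telescope sequence is available for an arbitrary direct system indexed by $\NN$.
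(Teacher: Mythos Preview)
Your proof is correct and is essentially the standard argument: the paper does not prove this theorem itself but simply cites \cite[Application 3.5.10]{Wei}, and the argument there is exactly the mapping-telescope presentation of $\colim_n A_n$ followed by the long exact $\Ext$ sequence and the identification of $\partial_0,\partial_1$ with the $\lim/{\lim}^1$ differentials. Your remark that the injectivity hypothesis on $A_n\hookrightarrow A_{n+1}$ is never used is also correct.
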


\begin{define}\label{d:cotor}
  An abelian group $G$ is called \emph{cotorsion} if  $\Ext(\QQ,G)=0$.
\end{define}

In other words, an abelian group $G$ is cotorsion iff for every abelian group $H$, that contains $G$ as a subgroup, such that $H/G\cong\QQ$, we have that $G$ is a direct summand of $H$.

\section{Countable directed limits}\label{s:seq lim}
The purpose of this section is to prove Theorem \ref{t:main cotor}. We begin with a few preliminary definitions and propositions.

In \cite[Section 22]{Fu1}, Fuchs defines the notion of a \emph{system of equations} over an abelian group. There is an equation for every $i\in I$, with unknowns $(x_j)_{j\in J}$, where $I$ and $J$ can be arbitrary sets. We will only be using this notion with $I=J=\NN$.

\begin{define}
  Let $H$ be an abelian group. A \emph{system of equations} over $H$ is the following data:
  \begin{enumerate}
    \item An $\NN\times\NN$ matrix with entries in $\ZZ$, denoted $(l_{n,m})$, such that for every $n\in\NN$ the set
        $$\{m\in \NN\:\:|\:\:l_{n,m}\neq 0\}$$
        is finite.
    \item An element (vector) $(a_n)$ in $H^\NN$.
  \end{enumerate}

  Note that for every $n\in \NN$ we have an equation
  $$\sum_{m=0}^{\infty}l_{n,m}x_m=a_n,$$
  with unknowns $(x_m)$.

  A \emph{solution}, in $H$, to the system of equations is a vector $(g_n)$ in $H^\NN$ such that substituting $(x_m)=(g_m)$ in the system above yields correct statements.
\end{define}

\begin{prop}\label{p:eqation is cotor}
  Let $H$ be an abelian group. Then the following conditions are equivalent:
\begin{enumerate}
  \item The group $H$ is cotorsion.
  \item Every system of equations over $H$, with matrix
  $$\xymatrix{ 1 & -1 & 0   & 0   & \cdots\\
                 &  1  & -2 & 0   & \cdots\\
                 &     &   1 & -3 & \cdots\\
                 &     &     &  1  & \cdots\\
                 &     &     &     & \cdots}$$
  has a solution in $H$.
  \item Every system of equations over $H$, with a matrix that is upper triangular and has identities in its diagonal, has a solution in $H$.
\end{enumerate}
\end{prop}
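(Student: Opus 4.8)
The plan is to establish the equivalences in the cyclic order $(1)\Rightarrow(3)\Rightarrow(2)\Rightarrow(1)$, since $(3)\Rightarrow(2)$ is trivial (the displayed matrix is a special case of an upper triangular integer matrix with $1$'s on the diagonal), so the real content is $(1)\Rightarrow(3)$ and $(2)\Rightarrow(1)$.

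For $(1)\Rightarrow(3)$, suppose $H$ is cotorsion and we are given a system $\sum_m l_{n,m}x_m=a_n$ with $(l_{n,m})$ upper triangular and $l_{n,n}=1$ for all $n$. The idea is to solve the system ``from the top down'': for each $n$ the equation reads $x_n = a_n - \sum_{m>n} l_{n,m}x_m$, but the tail on the right involves infinitely many later unknowns, so naive back-substitution does not terminate. Instead I would reformulate solvability as a $\lim^1$ vanishing statement. Consider the free abelian group on the symbols $x_0,x_1,\dots$ modulo the relations coming from the system, i.e. form the cokernel of the map $\ZZ^{(\NN)}\xrightarrow{L^{T}}\ZZ^{(\NN)}$ (or dually work with $\Hom$), and use the upper-triangular-with-$1$'s structure to exhibit the relevant group as $\colim_n A_n$ where each $A_n$ is finitely generated free (obtained by truncating to the first $n$ coordinates and using the diagonal $1$'s to eliminate $x_0,\dots,x_{n-1}$). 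Then apply Theorem \ref{t:Ext colim} with $B=H$: the obstruction to lifting a compatible family of partial solutions (elements of $\Hom(A_n,H)$) to an actual solution lies in ${\lim}^1_n\Hom(A_n,H)$, which sits inside $\Ext(\colim_n A_n, H)$. Since each $A_n$ is free of finite rank, $\colim_n A_n$ is a countable torsion-free abelian group, hence a subgroup of a $\QQ$-vector space; using that cotorsion is equivalent to $\Ext(F,H)=0$ for all torsion-free $F$ (the parenthetical remark after Definition \ref{d:cotor}, which follows from $\Ext(\QQ,H)=0$ by a standard argument with $\ZZ\hookrightarrow\QQ$ and countable torsion-free groups being increasing unions of free groups), we get $\Ext(\colim_n A_n,H)=0$, so the $\lim^1$ term vanishes and the system has a solution.

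For $(2)\Rightarrow(1)$, I want to show that if every system with the specific matrix $(1,-1;1,-2;1,-3;\dots)$ is solvable over $H$, then $\Ext(\QQ,H)=0$. Here I would use the well-known presentation of $\QQ$ as a direct limit $\ZZ\xrightarrow{\cdot 2}\ZZ\xrightarrow{\cdot 3}\ZZ\xrightarrow{\cdot 4}\cdots$ (so $\QQ=\colim_n \ZZ$ with $A_n=\ZZ$ and the bonding map $A_n\to A_{n+1}$ being multiplication by $n+1$). Applying Theorem \ref{t:Ext colim} with this tower and $B=H$ gives a surjection ${\lim}^1_n\Hom(\ZZ,H)\twoheadrightarrow\Ext(\QQ,H)$; here $\Hom(\ZZ,H)\cong H$ and the tower $\Hom(A_{n+1},H)\to\Hom(A_n,H)$ is $H\xrightarrow{\cdot(n+1)}H$, so $\Ext(\QQ,H)$ is a quotient of ${\lim}^1$ of the tower $(H,\ \cdot(n+1))$. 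By the description of $\lim^1$ of a tower of abelian groups as the cokernel of $\partial$ (Remark \ref{r:lim1 Ab}), this $\lim^1$ vanishes iff the system $a_n = x_n - (n+1)x_{n+1}$, i.e. precisely the system with the displayed matrix, is solvable for every right-hand side $(a_n)\in H^\NN$ — which is exactly hypothesis (2). Hence $\Ext(\QQ,H)=0$ and $H$ is cotorsion. (One should be slightly careful that the indexing in the displayed matrix is $x_n - (n+1)x_{n+1} = a_n$ versus $x_n - n x_{n+1}=a_n$; a harmless reindexing handles this.)

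The main obstacle I expect is the careful identification in $(1)\Rightarrow(3)$ that solvability of a general upper-triangular-with-unit-diagonal system is governed by a $\lim^1$ term, and in particular writing the relevant torsion-free group as a countable filtered colimit of finitely generated free groups so that Theorem \ref{t:Ext colim} applies cleanly; the bookkeeping with the transpose matrix, the truncations $A_n$, and the compatibility of the partial solutions needs to be done with some care. The passage from $\Ext(\QQ,H)=0$ to $\Ext(F,H)=0$ for all countable torsion-free $F$ is standard but should be invoked explicitly (every countable torsion-free group embeds in a $\QQ$-vector space and is the union of an increasing chain of finitely generated, hence free, subgroups, and $\Ext(-,H)$ turns this colimit into a $\lim$/$\lim^1$ sequence as in Theorem \ref{t:Ext colim}, with all $\Ext(\ZZ^k,H)=0$). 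Everything else is formal.
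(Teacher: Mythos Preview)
Your $(2)\Rightarrow(1)$ coincides with the paper's argument. For $(1)\Rightarrow(3)$ you take a genuinely different route. The paper invokes an external structural fact from \cite{Fu1}: every cotorsion group is a quotient $p:G\twoheadrightarrow H$ of an algebraically compact group $G$; since every finite subsystem of an upper-triangular-with-unit-diagonal system is solvable by back-substitution, one lifts the constants to $G$, solves there by algebraic compactness, and projects down. Your approach is purely homological and avoids this reference: the transpose $L^T:\ZZ^{(\NN)}\to\ZZ^{(\NN)}$ is injective with torsion-free cokernel $C$ (both follow from looking at the smallest index in the support of a nonzero element, where only the diagonal entry $1$ contributes; this works over any coefficient ring, in particular over $\ZZ/k$, giving $\Tor_1(C,\ZZ/k)=0$), and then the long exact sequence for $\Hom(-,H)$ identifies $\coker(L:H^\NN\to H^\NN)\cong\Ext(C,H)$, which vanishes by the equivalent characterization of cotorsion recalled in the paper's introduction. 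Two small points: your $\colim_n A_n$\,/\,$\lim^1$ packaging is more elaborate than necessary, since the ordinary long exact sequence for $0\to\ZZ^{(\NN)}\xrightarrow{L^T}\ZZ^{(\NN)}\to C\to 0$ already does the job; and your sketched justification of ``$\Ext(\QQ,H)=0\Rightarrow\Ext(F,H)=0$ for torsion-free $F$'' via a filtration by free subgroups leaves a residual $\lim^1$ term unaccounted for, whereas the embedding $F\hookrightarrow F\otimes\QQ$ together with $\Ext(\bigoplus\QQ,H)=\prod\Ext(\QQ,H)=0$ handles it cleanly.
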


\begin{proof}
$(2)\Rightarrow(1)$. It is not hard to see that
$$\QQ:=\colim(\ZZ\xrightarrow{1}\ZZ\xrightarrow{2}\ZZ\xrightarrow{3} \cdots),$$
where $n$ denotes multiplication by $n$. So, by Theorem \ref{t:Ext colim}, there is a short exact sequence
$$0\lrar {\lim}^1_n\Hom(\ZZ,H)\lrar \Ext(\colim_n \ZZ,H) \lrar \lim_n\Ext(\ZZ,H)\lrar 0.$$
There is a natural isomorphism $\Hom(\ZZ,-)\cong \id_{\AbC}$ and, since $\ZZ$ is a free abelian group, we know that $\Ext(\ZZ,H)\cong 0$. Thus we obtain an isomorphism
$${\lim}^1 (\cdots\xrightarrow{3}H\xrightarrow{2}H\xrightarrow{1}H) \cong\Ext(\QQ,H).$$
By implementing condition (2) above, we see that for every $(a_n)\in H^\NN$ there exists $(b_n)\in H^\NN$ such that for every $n\in\NN$ we have
$b_n=a_n+(n+1)b_{n+1}$. By Remark \ref{r:lim1 Ab}, it follows that
$${\lim}^1 (\cdots\xrightarrow{3}H\xrightarrow{2}H\xrightarrow{1}H) \cong 0.$$
Thus, $\Ext(\QQ,H)\cong 0$ so $H$ is cotorsion (see Definition \ref{d:cotor}).

$(1)\Rightarrow(3)$. Let $(l_{n,m})$, $(b_n)$ be a system of equations over $H$, and suppose that $l_{n,n}=1$ and $l_{n,m}=0$ if $n>m$. Let us recall from \cite{Fu1} the notion of an \emph{algebraically compact group}. By \cite[Theorem 38.1]{Fu1}, an algebraically compact group can be defined as an abelian group $G$ such that every system of equations over $G$, for which every finite subsystem has a solution in $G$, also has a global solution in $G$. Since $H$ is cotorsion, we know, by \cite[Proposition 54.1]{Fu1}, that there exists an algebraically compact group $G$ and a surjective homomorphism $p:G\to H$. For every $n\in\NN$ let us choose $c_n\in G$, such that $p(c_n)=b_n$. Then $(l_{n,m})$, $(c_n)$ is a system of equations over $G$, and it is easy to see that every finite subsystem of it has a solution in $G$. (If $N\in\NN$ we can find a solution to the first $N$ equations as follows: First define $x_{N+1},x_{N+2},\cdots$ to be any elements in $G$. Now define $x_N\in G$ according to the $N$'th equation. Then define $x_{N-1}\in G$ according to the $(N-1)$'th equation, and so on.) Thus, we have a solution $(g_n)$ to this system of equations in $G$. Now it is easily seen that $(p(g_n))$ is a solution to our original system of equations in $H$.

$(3)\Rightarrow(2)$ is obvious so we are done.
\end{proof}

Let $H:\NN\lrar \Grp$ be a tower of groups. If $m\leq n$, then we have a unique morphism $n\to m$ in $\NN$. We define
$$\phi_{m,n}:=H(n\to m):H_n\to H_m.$$
We denote by $H_\omega$ the limit
$$H_\omega=\lim_{n\in \NN}H_n$$
and by
$$\phi_n:H_\omega\to H_n$$
the natural map, for every $n\in\NN$ (see Section \ref{ss:lim}).

\begin{lem}\label{l:reisha e}
  Suppose that $F< H_\omega$ is a subgroup and for every $n\in\NN$ we have
  $$\phi_n(F)=\phi_n(H_\omega).$$
  Then for every $f\in H_\omega$ and every $n\in\NN$, there exists $\bar{f}\in H_\omega$, such that:
  \begin{enumerate}
    \item $F\bar{f}=Ff$.
    \item For every $i<n$ we have $\phi_i(\bar{f})=e_{H_i}$.
  \end{enumerate}
\end{lem}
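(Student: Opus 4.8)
The plan is to reduce condition (2) to a single coordinate and then correct $f$ by a suitable element of $F$, using the hypothesis $\phi_n(F)=\phi_n(H_\omega)$.

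First I would dispose of the case $n=0$: then condition (2) is vacuous and one takes $\bar f=f$. So assume $n\geq 1$. The key observation is that, since the index set $\NN$ is totally ordered, for every $i<n$ the natural map $\phi_i:H_\omega\to H_i$ factors as $\phi_i=\phi_{i,n-1}\circ\phi_{n-1}$. Hence it is enough to find $\bar f\in H_\omega$ with $F\bar f=Ff$ and $\phi_{n-1}(\bar f)=e_{H_{n-1}}$; then automatically $\phi_i(\bar f)=\phi_{i,n-1}(e_{H_{n-1}})=e_{H_i}$ for all $i<n$, giving (2). In other words, ``kill coordinates $0,\dots,n-1$'' collapses to ``kill coordinate $n-1$''.

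To kill the $(n-1)$st coordinate, I would argue as follows. The set $\phi_{n-1}(H_\omega)$ is a subgroup of $H_{n-1}$, being the image of a group homomorphism, so it is closed under inverses; in particular $\phi_{n-1}(f)^{-1}\in\phi_{n-1}(H_\omega)$. By hypothesis $\phi_{n-1}(H_\omega)=\phi_{n-1}(F)$, so there exists $a\in F$ with $\phi_{n-1}(a)=\phi_{n-1}(f)^{-1}$. Set $\bar f:=af\in H_\omega$. Then $F\bar f=Faf=Ff$ since $a\in F$, which is (1), and
$$\phi_{n-1}(\bar f)=\phi_{n-1}(a)\,\phi_{n-1}(f)=\phi_{n-1}(f)^{-1}\phi_{n-1}(f)=e_{H_{n-1}},$$
which by the previous paragraph yields (2).

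This is essentially a one-line correction argument, so I do not expect a genuine obstacle. The only points worth stating carefully are that the hypothesis $\phi_{n-1}(F)=\phi_{n-1}(H_\omega)$ is really what is being used (without it $\phi_{n-1}(f)^{-1}$ need not lie in $\phi_{n-1}(F)$), and that the factorization of the $\phi_i$ through $\phi_{n-1}$ is exactly what makes it suffice to control a single coordinate.
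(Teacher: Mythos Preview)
Your proof is correct and follows essentially the same argument as the paper: both reduce to the single coordinate $n-1$ via the factorization $\phi_i=\phi_{i,n-1}\circ\phi_{n-1}$ and then left-multiply $f$ by an element of $F$ chosen using the hypothesis $\phi_{n-1}(F)=\phi_{n-1}(H_\omega)$. The only cosmetic difference is that the paper picks $f'\in F$ with $\phi_{n-1}(f')=\phi_{n-1}(f)$ and sets $\bar f=(f')^{-1}f$, whereas you pick $a\in F$ with $\phi_{n-1}(a)=\phi_{n-1}(f)^{-1}$ and set $\bar f=af$; these are the same choice.
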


\begin{proof}
Let $f\in H_\omega$ and let $n\in\NN$. If $n=0$ let us choose $\bar{f}:=f$. Suppose $n>0$.
By the hypothesis of the lemma we know that
  $$\phi_{n-1}(F)=\phi_{n-1}(H_\omega).$$
  Since $\phi_{n-1}(f)\in \phi_{n-1}(H_\omega)$, we obtain that there exists $f'\in F$ such that
  $$\phi_{n-1}(f')=\phi_{n-1}(f).$$
  We now define
  $$\bar{f}:=(f')^{-1}f\in H_\omega,$$
  so clearly $F\bar{f}=Ff$.

  Now let $i<n$. The following diagram commutes
  $$\xymatrix{H_\omega\ar[r]^{\phi_{n-1}} \ar[dr]_{\phi_{i}} & H_{n-1}\ar[d]^{\phi_{i,n-1}}\\
    &  H_i .}$$
    It follows that
    $$\phi_i(f')=\phi_{i,n-1}(\phi_{n-1}(f'))=
    \phi_{i,n-1}(\phi_{n-1}(f))=\phi_i(f),$$
    so we have
    $$\phi_i(\bar{f})=(\phi_i (f'))^{-1}\phi_i(f)=e_{H_i},$$
    which finishes the proof of our lemma.
\end{proof}

\begin{thm}\label{t:equation gen}
  Suppose that $F\triangleleft H_\omega$ is a normal subgroup such that $T:=H_\omega/F$ is abelian and for every $n\in\NN$ we have
  $$\phi_n(F)=\phi_n(H_\omega).$$
Then the abelian group $T$ is cotorsion.
\end{thm}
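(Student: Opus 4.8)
The plan is to verify condition (2) of Proposition~\ref{p:eqation is cotor} for the abelian group $T=H_\omega/F$, i.e. to solve an arbitrary system of equations over $T$ whose matrix is the bidiagonal matrix with $1$'s on the diagonal and $-1,-2,-3,\dots$ on the superdiagonal. Concretely, given $(\bar a_n)\in T^\NN$, I want to find $(\bar g_n)\in T^\NN$ with $\bar g_n=(n+1)\bar g_{n+1}+\bar a_n$ for all $n$. The strategy is to lift the data to $H_\omega$, build a solution there using the surjectivity hypothesis $\phi_n(F)=\phi_n(H_\omega)$ together with Lemma~\ref{l:reisha e}, and then push the solution back down to $T$.

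First I would choose, for each $n$, a lift $a_n\in H_\omega$ of $\bar a_n$ under the projection $\pi\colon H_\omega\to T$. The key idea is to construct the $g_n\in H_\omega$ recursively so that $g_n$ and $(n+1)$-fold "products" of $g_{n+1}$ and $a_n$ agree modulo $F$, but crucially to arrange, using Lemma~\ref{l:reisha e}, that at each stage the correction one needs lies deep enough in the tower ($\phi_i$ of it trivial for $i$ below some bound growing with the stage) that the infinitely many successive corrections converge to a genuine element of the inverse limit $H_\omega$. That is: I would process the equations in order, and when passing from stage $n$ to stage $n+1$, replace the current approximate solution by an $F$-equivalent one that is trivial on the first $n$ coordinates of the tower, so that the "tail" of the construction is a well-defined element of $H_\omega=\lim H_n$. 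Since $T$ is abelian, the defining relation $g_n F = (g_{n+1})^{n+1} a_n F$ makes sense unambiguously after projecting, even though $H_\omega$ itself need not be abelian; one works with coset representatives and uses normality of $F$ to rearrange.

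The main obstacle — and the reason Lemma~\ref{l:reisha e} is exactly the tool provided — is the convergence/consistency issue: a naive attempt to solve the equations one at a time in $H_\omega$ produces, at stage $n$, an element defined only up to an element of $F$, and these ambiguities must be resolved coherently so that the resulting sequence is compatible with the connecting maps $\phi_{m,n}$ and hence defines a point of $\lim H_n$. Lemma~\ref{l:reisha e} lets me, at each stage, adjust the chosen representative within its $F$-coset so that it is $e$ on all coordinates $\phi_i$ with $i<n$; this makes the sequence of partial constructions stabilize coordinatewise, so the limit exists in $H_\omega$. Once the $(g_n)$ are built in $H_\omega$ satisfying the relations mod $F$, applying $\pi$ gives $(\bar g_n)$ solving the original system over $T$, and Proposition~\ref{p:eqation is cotor} then yields that $T$ is cotorsion.

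I should be careful about one point: the matrix in Proposition~\ref{p:eqation is cotor}(2) has nonzero entries only on the diagonal and first superdiagonal, so each equation $\bar g_n=(n+1)\bar g_{n+1}+\bar a_n$ couples only two consecutive unknowns; this is what makes the stagewise recursion feasible and lets the Mittag-Leffler-type surjectivity hypothesis on $F$ do its job one level at a time. If instead I tried to use condition (3) directly (arbitrary upper-triangular unipotent matrix), the bookkeeping would be heavier, so I would stick with condition (2). The only genuinely nontrivial input beyond Lemma~\ref{l:reisha e} is the observation that the limit of the coordinatewise-stabilizing partial solutions is indeed an element of $H_\omega$ and does satisfy the required congruences mod $F$, which follows by chasing the commuting triangles relating $\phi_n$, $\phi_{n+1}$ and $\phi_{n,n+1}$ together with the fact that $\phi_n(F)=\phi_n(H_\omega)$ for every $n$.
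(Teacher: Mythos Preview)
Your plan is correct and matches the paper's proof: verify condition~(2) of Proposition~\ref{p:eqation is cotor}, lift the data to $H_\omega$, use Lemma~\ref{l:reisha e} to force triviality at the low coordinates so that the construction converges in the inverse limit, and project back to $T$. The only phrasing difference is that the paper applies Lemma~\ref{l:reisha e} once to the \emph{data} $\bar f_n$ (choosing representatives with $\phi_l(\bar f_n)=e$ for $l<n$), after which each $\bar g_n$ is given directly by a coordinatewise-finite product, whereas you describe adjusting ``approximate solutions'' stage by stage; these amount to the same mechanism, but the paper's version avoids any ambiguity about the direction of the recursion.
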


\begin{proof}
Let $(f_n)$ be an element in $T^\NN$. By Proposition \ref{p:eqation is cotor}, we need to show that there exists an element $(g_n)$ in $T^\NN$ such that for every $n\in \NN$ we have
$$g_n=f_n+(n+1) g_{n+1}.$$

Let $n\in\NN$. We have
$$f_n\in T=H_\omega/F,$$
so we can choose $f'_n\in H_\omega$ such that
$$[f'_n]:=F f'_n=f_n.$$
By Lemma \ref{l:reisha e}, there exists $\bar{f}_n\in H_\omega$, such that:
  \begin{enumerate}
    \item $[\bar{f}_n]=[f'_n]=f_n$.
    \item For every $l<n$ we have $\phi_l(\bar{f}_n)=e_{H_l}$.
  \end{enumerate}

For every $n>l$ we define $\bar{g}_{n,l}:=e_{H_l}$.

Now, let $l\geq 0$ be fixed. We have defined $\bar{g}_{n,l}\in H_l$ for every $n>l$. Let us now define $\bar{g}_{n,l}\in H_l$ for every $n\leq l$ recursively, using the formula
$$\bar{g}_{n,l}= \phi_l(\bar{f}_n)\bar{g}_{n+1,l}^{n+1}.$$

That is, we define
$$\bar{g}_{l,l}= \phi_l(\bar{f}_l)\bar{g}_{l+1,l}^{l+1}=
\phi_l(\bar{f}_l),$$
$$\bar{g}_{l-1,l}= \phi_l(\bar{f}_{l-1})\bar{g}_{l,l}^{l}=
\phi_l(\bar{f}_{l-1})\phi_l(\bar{f}_l)^{l},$$
and so on.

We have now defined $\bar{g}_{n,l}\in H_l$ for every $n,l\in \NN$, and clearly the formula
$$\bar{g}_{n,l}= \phi_l(\bar{f}_n)\bar{g}_{n+1,l}^{n+1}$$
is now satisfied for every $n,l\in \NN$. (Note that whenever $n>l$ we have $\phi_l(\bar{f}_n)=e_{H_l}$ by (2) above.)
For every $n\in\NN$ we now define
  $$\bar{g}_{n}:=(\bar{g}_{n,l})_{l\in\NN}\in\prod_{l\in \NN}H_l.$$

Recall from Section \ref{ss:lim}, that
$$H_\omega\cong\{(x_l)_{l\in \NN}\in\prod_{l\in \NN}H_l\:\:|\:\:\forall l\in\NN \:\:.\:\:\phi_{l,l+1}(x_{l+1})=x_{l}\}.$$
We want to show that for every $n\in\NN$ we actually have $\bar{g}_{n}\in H_\omega$, that is, that for every $n,l\in\NN$ we have
$$\phi_{l,l+1}(\bar{g}_{n,l+1})=\bar{g}_{n,l}.$$
Clearly, this follows from the following lemma, taking $i=l+2$:
\begin{lem}
  Let $l\in\NN$ be fixed. Then for every $i\leq l+2$ and every $n>l-i+1$ we have
  $$\phi_{l,l+1}(\bar{g}_{n,l+1})=\bar{g}_{n,l}.$$
\end{lem}

\begin{proof}
We prove the lemma by induction on $i$. When $i=0$ then $n>l+1$ so we have
$$\bar{g}_{n,l+1}=e_{H_{l+1}},\:\:\bar{g}_{n,l}=e_{H_{l}}$$
and the lemma is clear. Now suppose we have proven the lemma for some $i<l+2$, and let us prove it for $i+1$. Let $n>l-i$.
We know that
$$\bar{g}_{n,{l+1}}= \phi_{l+1}(\bar{f}_n)
\bar{g}_{n+1,{l+1}}^{n+1}.$$
It follows that
$$\phi_{l,l+1}(\bar{g}_{n,{l+1}})=\phi_{l,l+1}(\phi_{l+1}(\bar{f}_n))
\phi_{l,l+1}(\bar{g}_{n+1,{l+1}})^{n+1}.$$
Since $n+1>l-i+1$, we can use the induction hypothesis to obtain
$$\phi_{l,l+1}(\bar{g}_{n,{l+1}})=\phi_{l}(\bar{f}_n)
\bar{g}_{n+1,{l}}^{n+1}=\bar{g}_{n,l},$$
which proves our lemma.
\end{proof}

Let $n\in\NN$. We have shown that $\bar{g}_{n}\in H_\omega$. We define
$$g_n:=[\bar{g}_{n}]\in H_\omega/F=T.$$
For every $l\in \NN$ we have an equality in $H_l$
$$\bar{g}_{n,l}= \phi_l(\bar{f}_n)\bar{g}_{n+1,l}^{n+1}.$$
Thus in $H_\omega$ we have
$$\bar{g}_{n}= \bar{f}_n\bar{g}_{n+1}^{n+1}.$$
Passing to equivalence classes we obtain the following equality in $T=H_\omega/F$:
$$[\bar{g}_{n}]= [\bar{f}_n][\bar{g}_{n+1}]^{n+1}.$$
But $T$ is abelian, so in additive notation we obtain
$$g_n=f_n+(n+1)g_{n+1}.$$
which finishes the proof of our theorem.
\end{proof}

We now turn to the main result of this section.

\begin{thm}\label{t:S equation}
  Let $\cT$ be a countable directed poset and let $G:\cT\lrar\Grp$ be a diagram of groups that satisfies the Mittag-Leffler condition. Then the natural map
  $$\rho:\Ab(\lim_{t\in\cT}G_t)\lrar\lim_{t\in\cT}\Ab(G_t)$$
is surjective and its kernel is cotorsion.
\end{thm}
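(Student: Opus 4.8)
The plan is to reduce Theorem~\ref{t:S equation} to the already-proved Theorem~\ref{t:equation gen} by replacing the directed poset $\cT$ with a cofinal tower. First I would exploit the hypothesis that $\cT$ is countable and directed: any countable directed poset contains a cofinal chain, i.e.\ an increasing sequence $t_0\leq t_1\leq\cdots$ in $\cT$ such that every $t\in\cT$ satisfies $t\leq t_n$ for some $n$. Restricting $G$ along this chain gives a tower $H:\NN\lrar\Grp$ with $H_n:=G_{t_n}$, and cofinality yields natural isomorphisms $\lim_{t\in\cT}G_t\cong\lim_{n\in\NN}H_n=:H_\omega$ and $\lim_{t\in\cT}\Ab(G_t)\cong\lim_{n\in\NN}\Ab(H_n)$, compatible with the natural map $\rho$. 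One must check that $H$ still satisfies the Mittag--Leffler condition; this is routine from Definition~\ref{d:ML} and cofinality, but it is worth noting that we actually want something slightly stronger, obtained by passing to the diagram $H'$ of Section~\ref{ss:lim}: replacing each $H_n$ by $H'_n:=\bigcap_{m\geq n}\img(H_m\to H_n)$ does not change $\lim$, and Mittag--Leffler guarantees all the connecting maps $H'_{n+1}\to H'_n$ are \emph{surjective}. So without loss of generality we have a tower with surjective connecting homomorphisms and the same limit $H_\omega$.

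Next I would identify the kernel of $\rho$ group-theoretically. Write $\phi_n:H_\omega\to H_n$ for the projections. Set $F:=\phi_n^{-1}$-free description: let $F$ be the subgroup of $H_\omega$ generated by $\C(H_\omega)$ together with all elements of $H_\omega$ whose image is a commutator... more cleanly, let $F:=\{x\in H_\omega : \phi_n(x)\in\C(H_n)\text{ for all }n\}$, equivalently $F=\bigcap_n\phi_n^{-1}(\C(H_n))$. This $F$ is normal in $H_\omega$, and by construction $H_\omega/F$ embeds into $\prod_n \Ab(H_n)$, with image exactly $\lim_n\Ab(H_n)$; hence $H_\omega/F\cong\lim_n\Ab(H_n)$ and $H_\omega/F$ is abelian. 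The natural map $\rho$ factors as $\Ab(H_\omega)=H_\omega/\C(H_\omega)\twoheadrightarrow H_\omega/F$, so $\ker\rho = F/\C(H_\omega)$, and in particular $\rho$ is automatically surjective once we know $\img\rho$ lands in $\lim_n\Ab(H_n)$ and equals all of it — but surjectivity of $\rho$ itself (onto the full limit of abelianizations) still needs an argument, which I expect to come from the surjectivity of the connecting maps of $H'$: given a compatible system $(\bar a_n)\in\lim_n\Ab(H_n)$, lift $\bar a_0$ to $a_0\in H_0$, then successively lift along the surjections $H_{n+1}\to H_n$ correcting by commutators, to build an element of $H_\omega$ mapping to $(\bar a_n)$.

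To finish, I would apply Theorem~\ref{t:equation gen} to the normal subgroup $F\triangleleft H_\omega$: it gives that $H_\omega/F$ is cotorsion, provided $\phi_n(F)=\phi_n(H_\omega)$ for all $n$. This last condition is exactly where the surjective connecting maps of $H'$ are used: given $y\in H_\omega$, I want $x\in F$ with $\phi_n(x)=\phi_n(y)$, i.e.\ $x$ agreeing with $y$ in coordinate $n$ but with $\phi_m(x)\in\C(H_m)$ for every $m$. Using surjectivity one builds $x$ coordinate by coordinate: fix coordinates $\leq n$ to equal those of $y$, and for $m>n$ choose $\phi_m(x)$ to be a lift of $\phi_{m-1}(x)$ that additionally lies in $\C(H_m)$ — possible because the surjection $H'_m\to H'_{m-1}$ restricts to a surjection $\C(H'_m)\to\C(H'_{m-1})$ (commutators map onto commutators under a surjection) after adjusting; the necessary flexibility is that once $\phi_{m-1}(x)$ has been arranged to lie in $\C(H'_{m-1})$, a preimage in $\C(H'_m)$ exists. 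Thus every such $F$-coset of $H_\omega$ meets the fibre, giving $\phi_n(F)=\phi_n(H_\omega)$.

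The main obstacle I anticipate is precisely this verification that $\phi_n(F)=\phi_n(H_\omega)$: it requires combining the surjectivity coming from Mittag--Leffler with the fact that surjections of groups induce surjections on commutator subgroups, and organizing an inductive construction of elements of the inverse limit that are "eventually commutators" in every coordinate. The reduction to a tower (cofinal chain, passage to $H'$) and the identification $\ker\rho=F/\C(H_\omega)$ are comparatively routine, and the cotorsion conclusion is then immediate from Theorem~\ref{t:equation gen}.
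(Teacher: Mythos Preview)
Your reduction to a tower with surjective connecting maps is correct and matches the paper. The genuine gap is in the final step: you apply Theorem~\ref{t:equation gen} to the pair $(H_\omega,F)$ with $F=\bigcap_n\phi_n^{-1}(\C(H_n))$, but this yields only that $H_\omega/F$ is cotorsion. As you yourself observe, $H_\omega/F\cong\lim_n\Ab(H_n)$ is the \emph{codomain} of $\rho$, while $\ker\rho=F/\C(H_\omega)$ is the quotient you actually need. These are different groups, and the theorem says nothing about the latter in your setup.

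Moreover, the hypothesis $\phi_n(F)=\phi_n(H_\omega)$ fails in general for your $F$: since $\phi_n(F)\subseteq\C(H_n)$ by definition, equality would force $\phi_n(H_\omega)\subseteq\C(H_n)$, i.e.\ each $H'_n$ would have to be perfect. Your inductive construction of $x\in F$ with $\phi_n(x)=\phi_n(y)$ cannot work for arbitrary $y\in H_\omega$, because membership in $F$ requires $\phi_m(x)\in\C(H_m)$ for \emph{all} $m$, including $m\leq n$, and you have set those coordinates equal to $\phi_m(y)$.

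The fix, which is what the paper does, is to apply Theorem~\ref{t:equation gen} to the tower $\C(H):n\mapsto\C(H_n)$ rather than to $H$ itself. Its limit is exactly your $F=\lim_n\C(H_n)$, and the normal subgroup to take is $\C(H_\omega)=\C(\lim H)$. The quotient $F/\C(H_\omega)$ is then $\ker\rho$, it is abelian (being a subgroup of $\Ab(H_\omega)$), and the image condition becomes $\phi_n(\C(\lim H))=\phi_n(\lim\C(H))$, both sides equalling $\C(H'_n)$ once the connecting maps are surjective. The paper handles surjectivity of $\rho$ via the six-term $\lim/\lim^1$ sequence and the vanishing $\lim^1\C(G)=0$ from Mittag--Leffler, rather than by the explicit lifting you sketch.
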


\begin{proof}
Since $\cT$ is a countable directed poset, there exists a cofinal functor $\NN\lrar\cT$. Thus we can assume that $\cT=\NN$.

We have the following short exact sequence of towers of groups:
$$0\to \C(G)\to G\to \Ab(G)\to 0.$$
Thus, by Theorem \ref{t:lim1}, we have an exact sequence
$$0\to\lim \C(G)\to\lim G\to\lim \Ab(G)\to{\lim}^1 C(G)\to{\lim}^1 G\to{\lim}^1 \Ab(G)\to 0.$$
Since $G$ satisfies the Mittag-Leffler condition, it is not hard to see that $\C(G)$ also satisfies the Mittag-Leffler condition. (Note that for every structure map $\phi:G_n\to G_{m}$ we have $\phi(\C(G_n))=\C(\phi(G_n)).)$
Thus, by Theorem \ref{t:lim1}, we have $\lim^1 \C(G)=0$ so we obtain a short exact sequence of groups
$$0\to\lim \C(G)\to\lim G\to\lim \Ab(G)\to 0.$$
In particular, $\lim G\to\lim \Ab(G)$ is surjective, so the map it induces
$\rho:\Ab(\lim G)\to\lim \Ab(G)$ is also surjective.

Moreover, we have a natural inclusion $\lim \C(G)\subseteq\lim G$
and a natural isomorphism
$$\lim G/\lim \C(G)\cong\lim \Ab(G).$$
Under this isomorphism, $\rho$ is becomes the obvious map
$$\lim G/\C(\lim G)\lrar\lim G/\lim \C(G),$$
so we have a natural isomorphism
$$\ker(\rho)\cong \lim \C(G)/\C(\lim G).$$

We know that $\C(\lim G)$ is a normal subgroup of $\lim G$ and thus it is also a normal subgroup of $\lim \C(G)$.

We define $G':\NN\lrar \Grp$ and $\C(G)':\NN\lrar \Grp$ as in the beginning of Section \ref{ss:lim}. Since for every structure map $\phi:G_n\to G_{m}$ we have $\phi(\C(G_n))=\C(\phi(G_n))$, it is not hard to see that
$$\C(G)'=\C(G').$$

Let $n\in\NN$. Since all the structure maps $G'_{m+1}\to G'_m$ are surjective, it follows that the map
  $\phi_{n}:\lim G\cong\lim G'\to G'_{n}$
  is surjective. Thus
  $$\phi_n|_{\C(\lim G)}:\C(\lim G)\to \C(G'_n)$$
  is also surjective and
  $$\im(\phi_n|_{\C(\lim G)})=\C(G'_n).$$
  Since all the structure maps $C(G'_{m+1})\to C(G'_m)$ are surjective, it follows that the map
  $$\phi_n|_{\lim \C(G)}:\lim \C(G)\cong\lim \C(G')\to \C(G'_{n})$$
  is surjective and
   $$\im(\phi_n|_{\lim \C(G)})=\C(G'_n).$$

   Thus, for every $n\in\NN$ we have
  $$\im(\phi_n|_{\C(\lim G)})=\im(\phi_n|_{\lim \C(G)}).$$
   Using Theorem \ref{t:equation gen} with $H:=\C \circ G:\NN\lrar\Grp$, we see that
  $\ker(\rho)\cong\lim \C(G)/\C(\lim G)$
   is cotorsion.
\end{proof}

\section{Countable products}\label{s:prod}
Let $(H_n)_{n\in\NN}$ be a countable collection of groups. We can construct from this collection a diagram $G:\NN\lrar \Grp$ by letting $G_n$ be the product $H_1\times\cdots\times H_n,$
for every $n\in\NN$, and $G_{m}\to G_n$ be the natural projection, for every $m\geq n$.
Since for every $n\in\NN$ we have
$$\Ab(\prod_{i\leq n}H_i)\cong\prod_{i\leq n}\Ab(H_i),$$
we see that the natural map in Theorem \ref{t:S equation} becomes
$$\rho:\Ab(\prod_{i\in\NN}H_i)\lrar \prod_{i\in\NN}\Ab(H_i).$$
Clearly all the structure maps are surjective so, by Theorem \ref{t:S equation}, $\rho$ is surjective and $\ker(\rho)$ is cotorsion. The purpose of this section is to show that $\ker(\rho)$ cannot be any cotorsion group in this case. Namely, by Harrison's structure theorem for cotorsion groups \cite{Har}, we know that any torsion group is the torsion part of some cotorsion group. Since a torsion group can have arbitrary large Ulm length, it follows that the same is true for a cotorsion group. However, we show in Theorem \ref{t:Ulm} that $u(\ker(\rho))\leq \aleph_1$.

We begin with a few preliminary definitions and propositions.

\begin{prop}\label{p:p-divisible}
  Let $A$ be an abelian group and $p$ a prime number. Suppose that $(y_m)_{m\in \NN}$ is an element in $A^\NN$ such that for every $m\in\NN$ we have $y_m=py_{m+1}$. Then $y_0\in p^{\l_p(A)} A$.
\end{prop}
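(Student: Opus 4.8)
The plan is to prove, by transfinite induction on the ordinal $\lambda$, the stronger assertion that $y_m \in p^\lambda A$ for \emph{every} $m \in \NN$ (not merely for $m=0$); the proposition then follows immediately by taking $\lambda = \l_p(A)$ and $m = 0$.

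The base case $\lambda = 0$ is trivial, since $p^0 A = A$. For the successor step, I would assume that $y_m \in p^\lambda A$ for all $m$ and then, using the relation $y_m = p y_{m+1}$ together with the induction hypothesis applied to the index $m+1$, conclude that $y_m = p y_{m+1} \in p(p^\lambda A) = p^{\lambda+1} A$. For a limit ordinal $\delta$, I would assume that $y_m \in p^\mu A$ for all $\mu < \delta$ and all $m$, whence $y_m \in \bigcap_{\mu < \delta} p^\mu A = p^\delta A$ by definition.

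The only point that requires a little care — and it is a mild one — is that the induction must be carried out for all indices $m$ simultaneously: at the successor step the hypothesis that is actually invoked is the one for $y_{m+1}$, not the one for $y_m$, so restricting attention to $y_0$ alone would break the argument. Apart from this, the proof is just a direct unwinding of the recursive definition of the subgroups $p^\lambda A$, and I do not foresee any real obstacle. Note that the stronger assertion in fact shows that the whole sequence $(y_m)_{m\in\NN}$ lies in the largest $p$-divisible subgroup $p^{\l_p(A)} A$ of $A$, although only the case $m = 0$ is needed for the statement.
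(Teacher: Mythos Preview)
Your proof is correct and is essentially the same as the paper's. The only cosmetic difference is that the paper phrases the induction as ``$y_0\in p^\lambda A$ for every sequence satisfying the hypothesis'' and at the successor step applies this to the shifted sequence $(y_m)_{m\geq 1}$ to obtain $y_1\in p^\lambda A$, whereas you strengthen the induction hypothesis to ``$y_m\in p^\lambda A$ for all $m$'' for the fixed sequence; these are equivalent ways of handling exactly the point you flagged.
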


\begin{proof}
   Clearly it is enough to show that for every ordinal $\lambda$ we have $y_0\in p^\lambda A$. We show this by induction on $\lambda$.

  Clearly $y_0\in p^0 A=A$.
  Let $\lambda$ be an ordinal and suppose we have shown that $y_0\in p^\beta A$, for every $\beta\leq\lambda$. Applying what we have shown to $(y_m)_{m\geq 1}$ we see that $y_1\in p^\lambda A$. Thus we obtain
  $$y_0=py_1\in p(p^\lambda A)=p^{\lambda+1}A.$$
  Now suppose that $\lambda$ is a limit ordinal and we have shown that $y_0\in p^\beta A$ for every $\beta<\lambda$.
  Then
  $$y_0\in \bigcap _{\beta<\lambda}p^\beta A=p^\lambda A,$$
  which finishes the proof by induction.
\end{proof}

\begin{define}
Let $\lambda$ be an ordinal. We define
$$\des(\lambda):=\{(\mu_1,...,\mu_n)\:|\:\text{$n\geq 0$ and $\lambda>\mu_1>\cdots>\mu_n$}\}.$$
Note that $\des(\lambda)$ contains also the empty string $\phi$, corresponding to $n=0$.

Let $\mu=(\mu_1,...,\mu_n)\in\des(\lambda)$. We define
$$\l(\mu):=n\geq 0,$$
$$\min(\mu):=\begin{cases}
\lambda & \text{ if $\mu=\phi$,}\\
\mu_n & \text{ if $\mu\neq\phi$,}
                    \end{cases}$$
and if $n\geq m\geq 0$ we define $\mu|_{m}:=(\mu_1,...,\mu_{m})$.
\end{define}

The following proposition gives some motivation for the previous definition:

\begin{prop}\label{p:des}
  Let $H$ be an abelian group and let $\lambda$ be an ordinal. Then for every $x\in p^\lambda H$ there exist
  $$\bar{x}=(x_\mu)_{\mu\in\des(\lambda)}
  \in\prod_{\mu\in\des(\lambda)} p^{\min(\mu)}H,$$
  such that the following hold:
  \begin{enumerate}
    \item $x_\phi=x.$
    \item If $\mu\in\des(\lambda)$ and $\l(\mu)=n>0$, then $px_\mu=x_{\mu|_{n-1}}.$
  \end{enumerate}
\end{prop}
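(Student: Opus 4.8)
The plan is to construct the family $\bar x=(x_\mu)_{\mu\in\des(\lambda)}$ by induction on the length $\l(\mu)=n$, producing all the entries indexed by strings of length $n+1$ once the entries of length $\le n$ are in place. For the base case $n=0$ the only string is $\mu=\phi$, and we set $x_\phi:=x$; since $\min(\phi)=\lambda$ and $x\in p^\lambda H$ by hypothesis, this entry lies in the required subgroup $p^{\min(\phi)}H$, and condition (1) holds by construction. There is nothing else to check at this stage, since condition (2) is vacuous for strings of length $0$.

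For the inductive step, suppose $x_\nu\in p^{\min(\nu)}H$ has been defined for every $\nu\in\des(\lambda)$ with $\l(\nu)=n$, in such a way that (2) holds. Fix $\mu=(\mu_1,\dots,\mu_{n+1})\in\des(\lambda)$ and put $\nu:=\mu|_{n}=(\mu_1,\dots,\mu_n)$, which is again an element of $\des(\lambda)$. The key observation is that $\min(\nu)>\mu_{n+1}$: if $n>0$ this holds because $\min(\nu)=\mu_n>\mu_{n+1}$ by strict monotonicity of $\mu$, and if $n=0$ it holds because $\min(\nu)=\min(\phi)=\lambda>\mu_1=\mu_{n+1}$, as $\mu\in\des(\lambda)$. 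Hence $\min(\nu)\ge\mu_{n+1}+1$, and since the groups $p^\beta H$ are monotone decreasing in $\beta$ we obtain
$$x_\nu\in p^{\min(\nu)}H\subseteq p^{\mu_{n+1}+1}H=p\bigl(p^{\mu_{n+1}}H\bigr).$$
Therefore we may choose some $x_\mu\in p^{\mu_{n+1}}H$ with $px_\mu=x_\nu$. Then $x_\mu\in p^{\mu_{n+1}}H=p^{\min(\mu)}H$ and $px_\mu=x_\nu=x_{\mu|_{n}}$, so both requirements are met for $\mu$. Doing this for every $\mu$ of length $n+1$ (independently, using the axiom of choice) completes the induction and hence the construction.

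I expect there is no genuinely hard step here: the content is entirely in the identity $p^{\beta+1}H=p(p^\beta H)$ together with the monotonicity of the filtration $(p^\beta H)_\beta$, and the only point requiring care is the indexing — noting that $\min(\mu)$ is the last (hence smallest) coordinate, that $\mu|_{n}$ is again a legal strictly descending string below $\lambda$, and that the recursion (2) links $x_\mu$ only to its parent $x_{\mu|_{n-1}}$, so the choices made for different strings of the same length never have to be reconciled. In particular the ordinals enter only as labels on a tree of height $\omega$, and the whole argument is an ordinary induction on $n\in\NN$ rather than a transfinite one.
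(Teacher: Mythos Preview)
Your proof is correct and follows essentially the same approach as the paper: an induction on the length $n=\l(\mu)$, using $\min(\mu|_n)>\min(\mu)$ to get $x_{\mu|_n}\in p^{\min(\mu)+1}H=p(p^{\min(\mu)}H)$ and then choosing a $p$th root. The only cosmetic differences are your explicit case split on $n=0$ versus $n>0$ and your mention of the axiom of choice.
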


\begin{proof}
  We define $x_\mu\in p^{\min(\mu)}H$ for every $\mu\in\des(\lambda)$ recursively, relative to $\l(\mu)$.

  Suppose first that $\l(\mu)=0$. Then $\mu=\phi$ and we define  $$x_\phi:=x\in p^{\min(\phi)}H=p^{\lambda}H.$$

  Let $n\geq 0$ and suppose that we have defined $x_\mu\in p^{\min(\mu)}H$ for every $\mu\in\des(\lambda)$ with $\l(\mu)\leq n$, in such a way that condition (2) above holds where it is defined.

  Now let $\mu\in\des(\lambda)$ such that $\l(\mu)=n+1$. Clearly $\min(\mu)<\min(\mu|{n})$ so $\min(\mu)+1\leq\min(\mu|{n})$ and we have
  $$x_{\mu|{n}}\in p^{\min(\mu|{n})} H\subseteq p^{\min(\mu)+1} H=p(p^{\min(\mu)} H).$$
  Thus there exist $x_{\mu}\in p^{\min(\mu)}H$ such that $px_{\mu}=x_{\mu|{n}}$.
\end{proof}

\begin{prop}\label{p:des logic}
Let $N$ be an infinite set and $\kappa$ a cardinal such that $\kappa>|N|$. Suppose we are given $k_\mu\in N$, for every $\mu\in\des(\kappa)$.
Then there exists a sequence of triples $(k_n,S_n,\mu(n))_{n\in \NN}$, such that for every $n\in \NN$:
\begin{enumerate}
  \item $k_n\in N$.
  \item $S_n\subseteq\kappa$ and $|S_n|=\kappa$.
  \item $\mu(n)=(\mu(n)_\alpha)_{\alpha\in S_n}$ and for every $\alpha\in S_n$ we have
      \begin{enumerate}
        \item $\mu(n)_\alpha\in\des(\kappa)$.
        \item $l(\mu(n)_\alpha)=n+1$.
        \item $\min(\mu(n)_\alpha)=\alpha$.
        \item For every $l\leq n$ we have $k_{\mu(n)_\alpha|_{l+1}}=k_l$.
      \end{enumerate}
\end{enumerate}
\end{prop}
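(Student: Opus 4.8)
The plan is to construct the triples $(k_n,S_n,\mu(n))$ by recursion on $n$; the data at stage $n$ feeds directly into stage $n+1$. The only genuine input, used once per stage, is a pigeonhole principle in the form: \emph{a set of size $\kappa$ which is partitioned into at most $|N|<\kappa$ classes has a class of size $\kappa$}. (This is the point where one uses that $\kappa$ is regular, or at least that $\mathrm{cf}(\kappa)>|N|$; in the situations in which the proposition will be applied $\kappa$ is a regular cardinal such as $\aleph_1$.) Everything else is bookkeeping about strictly decreasing finite strings of ordinals.

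For the base case $n=0$, I would look at the function $\kappa\to N$, $\alpha\mapsto k_{(\alpha)}$, where $(\alpha)$ is the length-one string, which lies in $\des(\kappa)$. By the pigeonhole above some fibre has size $\kappa$; let $k_0\in N$ be its value, $S_0$ the fibre, and put $\mu(0)_\alpha:=(\alpha)$ for $\alpha\in S_0$. Then $l(\mu(0)_\alpha)=1$, $\min(\mu(0)_\alpha)=\alpha$, and the only instance of item (3)(d), the case $l=0$, reads $k_{\mu(0)_\alpha|_1}=k_{(\alpha)}=k_0$, which holds by the choice of $S_0$.

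For the recursion step, suppose $k_0,\dots,k_n$, $S_n$ and $(\mu(n)_\alpha)_{\alpha\in S_n}$ have been constructed. For each $\alpha\in S_n$ the set $S_n\cap(\alpha+1)$ has cardinality $\le|\alpha+1|<\kappa=|S_n|$, so $S_n$ contains some $\beta_\alpha>\alpha$; fix one. Since every entry of the string $\mu(n)_{\beta_\alpha}$ is $\ge\min(\mu(n)_{\beta_\alpha})=\beta_\alpha>\alpha$, the concatenation $\nu_\alpha:=\mu(n)_{\beta_\alpha}\,{}^\frown(\alpha)$ is again strictly decreasing, hence $\nu_\alpha\in\des(\kappa)$, with $l(\nu_\alpha)=n+2$ and $\min(\nu_\alpha)=\alpha$. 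Now apply the pigeonhole to $S_n\to N$, $\alpha\mapsto k_{\nu_\alpha}$: let $k_{n+1}\in N$ be the value of a fibre of size $\kappa$, let $S_{n+1}\subseteq S_n$ be that fibre, and set $\mu(n+1)_\alpha:=\nu_\alpha$ for $\alpha\in S_{n+1}$. Then items (3)(a)--(3)(c) hold by construction, and for item (3)(d): when $l\le n$ we have $\mu(n+1)_\alpha|_{l+1}=\mu(n)_{\beta_\alpha}|_{l+1}$, whose colour is $k_l$ by the inductive hypothesis applied to $\beta_\alpha\in S_n$; and when $l=n+1$ we have $\mu(n+1)_\alpha|_{n+2}=\nu_\alpha$, whose colour is $k_{n+1}$ by the choice of $S_{n+1}$. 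This closes the recursion.

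I do not expect a real obstacle. The argument is just the iterated pigeonhole together with two elementary facts: appending an ordinal smaller than all entries of a strictly decreasing string produces a strictly decreasing string one longer, and a subset of $\kappa$ of cardinality $\kappa$ has elements above every $\alpha<\kappa$. The one thing to keep in mind is that the pigeonhole at each stage requires that a partition of a set of size $\kappa$ into $\le|N|$ classes has a class of size $\kappa$, i.e. $\mathrm{cf}(\kappa)>|N|$ --- which is exactly the regime in which the proposition will be used.
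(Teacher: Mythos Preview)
Your argument is correct and follows the same overall strategy as the paper: build the triples by recursion on $n$, and at each stage use a pigeonhole on the colouring $\alpha\mapsto k_{\nu_\alpha}$ of a set of size $\kappa$ by $N$. The only substantive difference is in how the extensions $\nu_\alpha$ are produced at the recursive step. You simply pick, for each $\alpha\in S_n$, any $\beta_\alpha\in S_n$ with $\beta_\alpha>\alpha$ and set $\nu_\alpha=\mu(n)_{\beta_\alpha}{}^\frown(\alpha)$; the paper instead spends most of its proof constructing, by a separate transfinite recursion, a strictly increasing function $f_m:\kappa\to S_m$ with $f_m(\alpha)>\alpha$, and then sets $\mu(m+1)_\alpha=(\mu(m)_{f_m(\alpha)},\alpha)$ for all $\alpha<\kappa$ (so its $S_{m+1}$ is a subset of $\kappa$, not of $S_m$). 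Your choice is shorter and gives the extra feature $S_{n+1}\subseteq S_n$, which is harmless here; the paper's explicit $f_m$ buys nothing beyond what you already get. Your remark that the pigeonhole step needs $\mathrm{cf}(\kappa)>|N|$ is well taken: the paper simply asserts the existence of a fibre of size $\kappa$ from $\kappa>|N|$ alone, which as you observe is only valid under this cofinality assumption, satisfied in the application $\kappa=\aleph_1$.
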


\begin{proof}
  We define the triple $(k_n,S_n,\mu(n))$ recursively with $n$.

  We begin with $n=0$. For every $\alpha<\kappa$ we have $(\alpha)\in \des(\kappa)$, so $k_{(\alpha)}\in  N$. Since $\kappa$ is a cardinal and $\kappa>|N|$, there exists $k_0\in N$ such that
  $$|\{\alpha<\kappa\:|\:k_{(\alpha)}=k_0\}|=\kappa.$$
  We define
  $$S_0:=\{\alpha<\kappa\:|\:k_{(\alpha)}=k_0\},$$
  and for every $\alpha\in S_0$ we define
  $$\mu(0)_\alpha:=(\alpha)\in\des(\kappa).$$
  Clearly, for every $\alpha\in S_0$ we have
  \begin{enumerate}
        \item $l(\mu(0)_\alpha)=1$.
        \item $\min(\mu(0)_\alpha)=\alpha$.
        \item $k_{\mu(0)_\alpha|_{1}}=k_{(\alpha)}=k_0$.
      \end{enumerate}

    Now let $m\in \NN$ and suppose we have defined a triple $(k_n,S_n,\mu(n))$ for every  $n\leq m$ such that the conditions in the proposition are satisfied where they are defined. Let us define the triple  $(k_{m+1},S_{m+1},\mu(m+1))$.

     First we define recursively a strictly increasing function $f_m:\kappa\to S_m$ such that for every $\alpha<\kappa$ we have $\alpha<f_m(\alpha)$.

   We define
    $$f_m(0):=\min(S_m\setminus\{\min(S_m)\}).$$
    Clearly $0<f_m(0)$.

    Let $\beta<\kappa$ and suppose we have defined $f_m(\alpha)\in S_m$ for every $\alpha\leq \beta$. Let us now define $f_m(\beta+1)\in S_m$. We have
    $f_m(\beta)<\kappa$. Since $\kappa$ is a cardinal, it follows that $|f_m(\beta)|<\kappa$ and thus
    $$|S_m\setminus f_m(\beta)|=|\{\lambda\in S_m\:|\:\lambda\geq f_m(\beta)\}|=\kappa.$$
    In particular
    $$\{\lambda\in S_m\:|\:\lambda>f_m(\beta)\}\neq \phi$$
    and we can define
    $$f_m(\beta+1):=\min\{\lambda\in S_m\:|\:\lambda>f_m(\beta)\}\in S_m.$$
    Clearly we have
    $$f_m(\beta+1)>f_m(\beta)\geq \beta+1.$$

    Suppose that $\beta<\kappa$ is a limit ordinal and we have defined $f_m(\alpha)\in S_m$ for every $\alpha<\beta$.
    Since $\kappa$ is a cardinal, we have that $|f_m(\alpha)|<\kappa$ for every $\alpha<\beta$, and also $|\beta|<\kappa$. Thus
    $$|\bigcup_{\alpha<\beta}f_m(\alpha)|<\kappa,$$
    so
    $$|S_m\setminus \bigcup_{\alpha<\beta}f_m(\alpha)|=|\{\lambda\in S_m\:|\:\forall\alpha<\beta.\lambda\geq f_m(\alpha)\}|=\kappa.$$
    Since $|\beta|<\kappa$ we obtain in particular that
    $$\{\lambda\in S_m\:|\:\forall \alpha<\beta.\lambda>f_m(\alpha)\}\setminus\{\beta\}\neq \phi$$
    and we can define
    $$f_m(\beta):=\min(\{\lambda\in S_m\:|\:\forall \alpha<\beta.\lambda>f_m(\alpha)\}\setminus\{\beta\})\in S_m.$$
    For every $\alpha<\beta$ we thus have
    $$f_m(\beta)>f_m(\alpha)>\alpha,$$
    so $f_m(\beta)\geq\beta$. But $f_m(\beta)\neq\beta$ so we obtain
    $f_m(\beta)>\beta$.
    This finishes our recursive definition of $f_m:\kappa\to S_m$.

    For every $\alpha<\kappa$ we have $(\mu(m)_{f_m(\alpha)},\alpha)\in \des(\kappa)$, so $k_{(\mu(m)_{f_m(\alpha)},\alpha)}\in  N$. Since $\kappa$ is a cardinal and $\kappa>|N|$, there exists $k_{m+1}\in N$ such that
    $$|\{\alpha<\kappa\:|\:k_{(\mu(m)_{f_m(\alpha)},\alpha)}=k_{m+1}\}| =\kappa.$$
  We define
  $$S_{m+1}:=
  \{\alpha<\kappa\:|\:k_{(\mu(m)_{f_m(\alpha)},\alpha)}=k_{m+1}\},$$
  and for every $\alpha\in S_{m+1}$ we define
  $$\mu(m+1)_\alpha:=(\mu(m)_{f_m(\alpha)},\alpha)\in\des(\kappa).$$
  Let $\alpha\in S_{m+1}$. Using the induction hypothesis, we have:
  \begin{enumerate}
        \item $l(\mu(m+1)_\alpha)=l(\mu(m)_{f_m(\alpha)})+1=m+2$.
        \item $\min(\mu(m+1)_\alpha)=\alpha$.
      \end{enumerate}
 Let $l\leq m+1$. If $l=m+1$ we have
 $$k_{\mu(m+1)_\alpha|_{l+1}}=k_{\mu(m+1)_\alpha}=k_{m+1}=k_l,$$
 while if $l\leq m$ we have, using the induction hypothesis,
 $$k_{\mu(m+1)_\alpha|_{l+1}}=k_{\mu(m)_{f'_m(\alpha)}|_{l+1}}=k_l,$$
  which finishes the proof of our proposition.
\end{proof}

\begin{thm}\label{t:lenght}
  The natural map
  $$\rho:\Ab(\prod_{i\in\NN}H_i)\lrar \prod_{i\in\NN}\Ab(H_i)$$
is surjective and $\ker(\rho)$ is cotorsion and satisfies $l_p(\ker(\rho))\leq \aleph_1$ or every prime $p$.
\end{thm}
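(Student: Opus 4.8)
The surjectivity of $\rho$ and the fact that $\ker(\rho)$ is cotorsion are already supplied by Theorem~\ref{t:S equation}, applied to the tower $G_n:=H_1\times\cdots\times H_n$ whose structure maps $G_m\to G_n$ are the (surjective) projections; so the task is to bound $l_p(\ker(\rho))$ for a fixed prime $p$. The first step is to unwind the kernel. By the proof of Theorem~\ref{t:S equation}, $\ker(\rho)\cong\lim\C(G)/\C(\lim G)$ naturally; since $\C$ preserves finite products we have $\C(G_n)=\prod_{i\le n}\C(H_i)$, hence $\lim\C(G)=\prod_{i\in\NN}\C(H_i)$, while $\lim G=\prod_{i\in\NN}H_i$ and $\C(\lim G)=\C(\prod_{i\in\NN}H_i)$. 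Write $T:=\bigl(\prod_{i\in\NN}\C(H_i)\bigr)/\C\bigl(\prod_{i\in\NN}H_i\bigr)\cong\ker(\rho)$. The feature of $T$ I would use repeatedly is that an element $w$ of $\prod_i\C(H_i)$ lies in $\C(\prod_i H_i)$ exactly when the commutator lengths of its components $w_i$ in the $H_i$ are bounded uniformly in $i$; thus, for $x,y\in\prod_i\C(H_i)$ and $n\in\ZZ$, the relation $n[y]=[x]$ holds in $T$ iff the commutator length of $x_iy_i^{-n}$ in $H_i$ stays bounded as $i$ varies.

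Next I would reduce the length bound to Proposition~\ref{p:p-divisible}: it is enough to show that every $x\in p^{\aleph_1}T$ sits at the head of a coherent sequence of $p$-th roots, that is, a sequence $(z_m)_{m\in\NN}$ in $T$ with $z_0=x$ and $pz_{m+1}=z_m$ for all $m$. Granting this, Proposition~\ref{p:p-divisible} gives $x=z_0\in p^{l_p(T)}T$; letting $x$ range over $p^{\aleph_1}T$ we obtain $p^{\aleph_1}T\subseteq p^{l_p(T)}T$, and since $\lambda\mapsto p^\lambda T$ is monotone decreasing and first stabilizes at $l_p(T)$, this inclusion forces $l_p(T)\le\aleph_1$ --- if we had $l_p(T)>\aleph_1$, then $p^{\aleph_1}T$ would coincide with the $p$-divisible group $p^{l_p(T)}T$, contradicting the definition of $l_p(T)$.

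To produce the sequence, fix $x\in p^{\aleph_1}T$ and put $\kappa:=\aleph_1$. Proposition~\ref{p:des} applied to $T$ with $\lambda=\kappa$ gives $(x_\mu)_{\mu\in\des(\kappa)}$ with $x_\mu\in p^{\min(\mu)}T$, $x_\phi=x$, and $px_\mu=x_{\mu|_{n-1}}$ whenever $n=l(\mu)>0$. Fix once and for all a lift $\hat x_\mu\in\prod_i\C(H_i)$ of each $x_\mu$. Since $T$ is abelian, $px_\mu=x_{\mu|_{n-1}}$ says precisely that the ``division witness'' $\hat x_{\mu|_{n-1}}\hat x_\mu^{-p}$ lies in $\C(\prod_i H_i)$; let $k_\mu\in\NN$ be its commutator length there, and set $k_\phi:=0$. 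Now apply Proposition~\ref{p:des logic} with $N=\NN$, which is legitimate because $\kappa=\aleph_1>\aleph_0=|N|$, to obtain $(k_n,S_n,\mu(n))_{n\in\NN}$. What matters is: each $S_n$ has size $\kappa$, hence is cofinal in the regular cardinal $\kappa$; by the recursion $\mu(n+1)_\alpha=(\mu(n)_{f_n(\alpha)},\alpha)$ from the proof of that proposition, $\mu(n+1)_\alpha$ restricts on its first $n+1$ entries to $\mu(n)_{f_n(\alpha)}$, where $f_n\colon\kappa\to S_n$ is the strictly increasing map built there; and, taking $l=n$ in the relation $k_{\mu(n)_\alpha|_{l+1}}=k_l$, one gets $k_{\mu(n)_\alpha}=k_n$ for \emph{every} $\alpha\in S_n$. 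Hence the commutator lengths of the division witnesses encountered along the selected strings are bounded by the single sequence $(k_n)_{n\in\NN}$, uniformly in the ordinal parameter \emph{and} in the index $i\in\NN$.

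Finally I would build $(z_m)$ componentwise. Set $z_0:=x$. For each $i$, choose --- compatibly in $m$ --- ordinals $\alpha(m,i)\in S_m$ with $f_m(\alpha(m+1,i))=\alpha(m,i)$ for all $m$, equivalently with $\mu(m+1)_{\alpha(m+1,i)}$ restricting to $\mu(m)_{\alpha(m,i)}$; then let $z_{m+1}$ be the class in $T$ of $\bigl((\hat x_{\mu(m)_{\alpha(m,i)}})_i\bigr)_{i\in\NN}\in\prod_i\C(H_i)$. Using $px_{\mu(m)_\alpha}=x_{\mu(m)_\alpha|_m}$ and the string-compatibility, the element $(\hat x_{\mu(m-1)_{\alpha(m-1,i)}})_i\cdot\bigl((\hat x_{\mu(m)_{\alpha(m,i)}})_i\bigr)^{-p}$ is, in each component $i$, the $i$-th component of the division witness for $\mu(m)_{\alpha(m,i)}$ and so has commutator length $\le k_m$; being uniformly bounded it defines an element of $\C(\prod_i H_i)$, that is, $pz_{m+1}=z_m$ in $T$ (the step $pz_1=x$ is identical, with $k_0$ in place of $k_m$). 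This provides the required coherent sequence of $p$-th roots. The step I expect to be the genuine obstacle is the clause ``choose, compatibly in $m$'': $\des(\kappa)$ contains only \emph{finite} descending chains of ordinals, whereas the relations $pz_{m+1}=z_m$ are needed for all $m\in\NN$, so no single branch of the selected sub-tree can be followed; one must exhibit, for every $i$ simultaneously, an infinite chain $\mu(0)_{\alpha(0,i)}\subseteq\mu(1)_{\alpha(1,i)}\subseteq\cdots$ inside that sub-tree. Carrying this out uses both the cofinality of the sets $S_m$ in $\kappa$ and the detailed recursive behaviour of the maps $f_m$ --- in particular how $f_n$ acts at limit ordinals --- constructed in the proof of Proposition~\ref{p:des logic}; the rest is bookkeeping with commutator lengths.
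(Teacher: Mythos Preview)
Your setup and reduction are exactly those of the paper, and you have correctly diagnosed the obstacle --- but the obstacle is fatal to your plan as written, not merely technical. The ``compatible choices'' $\alpha(m,i)$ you ask for cannot exist: if $\mu(m+1)_{\alpha(m+1,i)}$ restricts to $\mu(m)_{\alpha(m,i)}$ then $\alpha(m+1,i)=\min(\mu(m+1)_{\alpha(m+1,i)})<\min(\mu(m)_{\alpha(m,i)})=\alpha(m,i)$, so $(\alpha(m,i))_{m\in\NN}$ would be an infinite strictly descending sequence of ordinals. No amount of cofinality of the $S_m$ or analysis of the maps $f_m$ repairs this; the tree of selected strings genuinely has no infinite branch, and your construction of $(z_m)$ collapses.

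The paper's way out is a diagonal trick that avoids infinite branches entirely. Instead of seeking, for each component $i$, a chain of strings indexed by $m$, one fixes for each component $n$ a \emph{single} string $\mu(n)_{\alpha_n}$ of length $n+1$ (with $\alpha_n:=\min S_n$, say), and defines $h_m\in\prod_n\C(H_n)$ by
\[
h_m(n):=\begin{cases}\e_{H_n}&\text{if }n<m,\\ f_{\mu(n)_{\alpha_n}|_m}(n)&\text{if }n\ge m.\end{cases}
\]
For a fixed $n$, the strings $\mu(n)_{\alpha_n}|_m$ (for $m\le n$) are all truncations of one finite string, so compatibility is automatic: in component $n\ge m+1$, the element $h_m(n)^{-1}h_{m+1}(n)^p$ is exactly the $n$-th component of the division witness for $\mu(n)_{\alpha_n}|_{m+1}$, and by clause~(d) of Proposition~\ref{p:des logic} (with $l=m\le n$) this witness has commutator length $k_{\mu(n)_{\alpha_n}|_{m+1}}=k_m$. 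Hence for each fixed $m$ the relation $h_m(n)^{-1}h_{m+1}(n)^p=\prod_{l<k_m}[d_{m+1,2l}(n),d_{m+1,2l+1}(n)]$ holds for all $n\ge m+1$; the finitely many exceptional components $n\le m$ contribute an element of $\prod_n\C(H_n)$ with finite support, which lies in $\C(\prod_n H_n)$ and so vanishes in $T$. This gives $p[h_{m+1}]=[h_m]$. Finally $h_0(n)=f_{\mu(n)_{\alpha_n}|_0}(n)=f_\phi(n)$ for every $n$, so $[h_0]=x$, and Proposition~\ref{p:p-divisible} finishes as you intended. The moral is that the freedom to vary the string with the component index $n$ --- and to simply discard finitely many components at each stage --- is what replaces the nonexistent infinite branch.
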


\begin{proof}
By Theorem \ref{t:S equation}, $\rho$ is surjective and $\ker(\rho)$ is cotorsion.

Now let $p\in\PP$. For convenience of notation let us denote
$S:=\ker(\rho).$
Recall from the proof of Theorem \ref{t:S equation} that
  $$S\cong\lim_{n\in \NN}\C(G_n)/\C(\lim_{n\in \NN}G_n) .$$
  Since for every $n\in \NN$ we have
  $$\C(G_n)\cong \C(H_1\times\cdots\times H_n)\cong \C(H_1)\times\cdots\times \C(H_n),$$
  we see that
  $$S\cong\prod_{n\in \NN}\C(H_n)/\C(\prod_{n\in \NN}H_n) .$$
We need to show that $p^{\aleph_1}S$ is $p$-divisible. It is clearly enough to show $p^{\aleph_1}S\subseteq p^{\l_p(S)} S$.

So let $x\in p^{\aleph_1}S$. We define
$$\bar{x}=(x_\mu)_{\mu\in\des(\aleph_1)}
  \in\prod_{\mu\in\des(\aleph_1)} p^{\min(\mu)}S,$$
  as in Proposition \ref{p:des}.

For every $\mu\in\des(\aleph_1)$ we have
$$x_\mu\in p^{\min(\mu)}S\subseteq S=\prod_{n\in \NN}\C(H_n)/\C(\prod_{n\in \NN}H_n),$$
so let us choose a representative
$$f_\mu=(f_\mu(n))_{n\in\NN}\in\prod_{n\in\NN}\C(H_n)$$
such that $[f_\mu]=x_\mu.$

Let $\mu\in\des(\aleph_1)$ with $l(\mu)=n>0$. Then by Proposition \ref{p:des} we have (in multiplicative notation)
$$x^p_\mu=x_{\mu|_{n-1}}.$$
Thus
$$x_{\mu|_{n-1}}^{-1}x_\mu^p=\e\in S,$$
so
$$f_{\mu|_{n-1}}^{-1}f_\mu^p\in\C(\prod_{n\in \NN}H_n).$$
Let $k_\mu\in\NN$ be the commutator length of $f_{\mu|_{n-1}}^{-1}f_\mu^p$, so there exist
$$g_{\mu,t}=(g_{\mu,t}(n))_{n\in\NN}\in\prod_{n\in\NN}H_n$$
for every $t<2k_\mu$ such that
$$f_{\mu|_{n-1}}^{-1}f_\mu^p=\prod_{l<k_\mu}[g_{\mu,2l},g_{\mu,2l+1}].$$

Let us define $k_\phi:=0$. By Proposition \ref{p:des logic}, applied for the set $N=\NN$, the cardinal $\kappa=\aleph_1$, and $(k_\mu)_{\mu\in\des(\aleph_1)}$ defined above, we see that there exists a sequence of triples $(k_n,S_n,\mu(n))_{n\in \NN}$, such that for every $n\in \NN$ we have:
\begin{enumerate}
  \item $k_n\in \NN$.
  \item $S_n\subseteq\aleph_1$ and $|S_n|=\aleph_1$.
  \item $\mu(n)=(\mu(n)_\alpha)_{\alpha\in S_n}$ and for every $\alpha\in S_n$ we have
      \begin{enumerate}
        \item $\mu(n)_\alpha\in\des(\aleph_1)$.
        \item $l(\mu(n)_\alpha)=n+1$.
        \item $\min(\mu(n)_\alpha)=\alpha$.
        \item For every $l\leq n$ we have $k_{\mu(n)_\alpha|_{l+1}}=k_l$.
      \end{enumerate}
\end{enumerate}

For every $m\in\NN$ we define $\alpha_m:=\min(S_m)$, and we define
$$h_m=(h_m(n))_{n\in\NN}\in\prod_{n\in\NN}\C(H_n)$$
by
$$h_m(n):=\begin{cases}
\e_{H_n} & \text{ if $n<m$,}\\
f_{\mu(n)_{\alpha_n}|_m}(n) & \text{ if $n\geq m$.}
                    \end{cases}$$
For every $m>0$ and $t<2k_m$ we define
$$d_{m,t}=(d_{m,t}(n))_{n\in\NN}\in\prod_{n\in\NN}H_n$$
by
$$d_{m,t}(n):=\begin{cases}
\e_{H_n} & \text{ if $n<m$,}\\
g_{\mu(n)_{\alpha_n}|_m,t}(n) & \text{ if $n\geq m$.}
                    \end{cases}$$

Let $n>m\geq 0$. Since $\mu(n)_{\alpha_n}|_{m+1}\in\des(\aleph_1)\setminus\{\phi\}$, we have as above
$$f_{\mu(n)_{\alpha_n}|_{m}}^{-1}f_{\mu(n)_{\alpha_n}|_{m+1}}^p=
\prod_{l<k_{\mu(n)_{\alpha_n}|_{m+1}}}[g_{\mu(n)_{\alpha_n}|_{m+1},2l}, g_{\mu(n)_{\alpha_n}|_{m+1},2l+1}].$$
In particular, we get an equality in $H_n$:
$$f_{\mu(n)_{\alpha_n}|_{m}}(n)^{-1}f_{\mu(n)_{\alpha_n}|_{m+1}}(n)^p=
\prod_{l<k_{\mu(n)_{\alpha_n}|_{m+1}}}[g_{\mu(n)_{\alpha_n}|_{m+1},2l}(n), g_{\mu(n)_{\alpha_n}|_{m+1},2l+1}(n)].$$
But $n\geq m+1$ so we obtain
$$h_m(n)^{-1}h_{m+1}(n)^p=\prod_{l<k_m}[d_{m+1,2l}(n),d_{m+1,2l+1}(n)].$$
For every fixed $m\in\NN$, the equality above holds for almost all $n\in\NN$. Passing to equivalence classes in
$$S=\prod_{n\in \NN}\C(H_n)/\C(\prod_{n\in \NN}H_n)$$
we thus obtain
$$[h_m^{-1}h_{m+1}^p]=
[\prod_{l<k_m}[d_{m+1,2l},d_{m+1,2l+1}]]=\e,$$
or
$$[h_m]=[h_{m+1}]^p.$$
By Proposition \ref{p:p-divisible} we obtain
$$[h_0]\in p^{l_p(S)}S.$$
But for every $n\in\NN$ we have $h_0(n)=f_\phi(n)$ so $h_0=f_\phi$ and
$$x=x_\phi=[f_\phi]=[h_0]\in p^{l_p(S)}S,$$
as required.
\end{proof}

We now wish to prove a variant of Theorem \ref{t:lenght} which uses the Ulm length instead of the $p$-length.

\begin{prop}\label{p:u l p}
   Let $p$ be a prime number and let $G$ be an abelian group that is a module over the $p$-adic integers. Then for every ordinal $\lambda$ we have $u(G)\leq{\lambda}$ iff $l_p(G)\leq\omega\lambda$.
\end{prop}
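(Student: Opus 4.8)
The plan is to reduce the whole statement to a single identity between Ulm subgroups and $p$-power subgroups: for any $\ZZ_p$-module $G$ and any ordinal $\lambda$,
$$G^\lambda=p^{\omega\lambda}G.$$
Granting this, the proposition follows at once. By Definition \ref{d:Ulm}, $u(G)\le\lambda$ holds iff $G^\lambda$ is divisible. Now in any $\ZZ_p$-module $M$, divisibility is equivalent to $p$-divisibility: if $n=p^km$ with $p\nmid m$, then $m$ is a unit of $\ZZ_p$, so $mM=M$ and hence $nM=p^kM$; thus $M$ is divisible iff $p^kM=M$ for all $k$, i.e.\ iff $M$ is $p$-divisible. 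Since $G^\lambda=p^{\omega\lambda}G$ is again a $\ZZ_p$-module, it is divisible iff it is $p$-divisible iff $l_p(G)\le\omega\lambda$, which is exactly what we want.

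Before the main induction I would record two routine preliminaries. First, $p^\alpha G$ is a $\ZZ_p$-submodule of $G$ for every ordinal $\alpha$: this is immediate by transfinite induction, since $pA$ is a $\ZZ_p$-submodule whenever $A$ is, and an intersection of $\ZZ_p$-submodules is one (equivalently, the $p^\alpha G$ are fully invariant). Second, for a $\ZZ_p$-module $M$ we have $M^1=p^\omega M$: indeed $M^1=\bigcap_{n\ge1}nM$ and, by the computation $nM=p^kM$ above, as $n$ ranges over the positive integers the exponent $k$ ranges over all of $\NN$, so $\bigcap_{n\ge1}nM=\bigcap_{k\ge0}p^kM=p^\omega M$.

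Then I would prove $G^\lambda=p^{\omega\lambda}G$ by transfinite induction on $\lambda$. The case $\lambda=0$ is the equality $G=G$. For $\lambda=\mu+1$, the induction hypothesis gives $G^\mu=p^{\omega\mu}G$, which is a $\ZZ_p$-module by the first preliminary; applying the second preliminary to it yields $G^{\mu+1}=(G^\mu)^1=p^\omega(p^{\omega\mu}G)$, and it remains to see that $p^\omega(p^{\omega\mu}G)=p^{\omega\mu+\omega}G=p^{\omega(\mu+1)}G$. This is the concatenation identity $p^\beta(p^\alpha G)=p^{\alpha+\beta}G$ (here with $\alpha=\omega\mu$, $\beta=\omega$), itself a short induction on $\beta$ in which the limit stage uses that the ordinals $<\alpha+\beta$ split into those $<\alpha$ (on which $p^\delta G\supseteq p^\alpha G$, so they do not affect the intersection) and those of the form $\alpha+\gamma$ with $\gamma<\beta$. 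For $\lambda$ a limit ordinal, continuity of ordinal multiplication gives $\omega\lambda=\sup_{\mu<\lambda}\omega\mu$, so by monotonicity of $\delta\mapsto p^\delta G$ we get $p^{\omega\lambda}G=\bigcap_{\delta<\omega\lambda}p^\delta G=\bigcap_{\mu<\lambda}p^{\omega\mu}G$, and the latter equals $\bigcap_{\mu<\lambda}G^\mu=G^\lambda$ by the induction hypothesis.

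The proof has no conceptually hard step; the only thing requiring care is the ordinal bookkeeping in the two limit stages (the main induction and the auxiliary concatenation identity), where one must match the indexing sets $\{\omega\mu:\mu<\lambda\}$ and $\{\alpha+\gamma:\gamma<\beta\}$ against $\{\delta:\delta<\omega\lambda\}$ and $\{\delta:\delta<\alpha+\beta\}$ using that $\alpha\mapsto p^\alpha G$ is monotone decreasing and continuous. The real content is the elementary observation that in a $\ZZ_p$-module one can divide freely by integers prime to $p$, so the Ulm filtration is just the $p$-filtration reindexed by $\lambda\mapsto\omega\lambda$.
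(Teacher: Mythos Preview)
Your argument is correct and follows essentially the same route as the paper: establish the identity $G^\lambda=p^{\omega\lambda}G$ for a $\ZZ_p$-module $G$, observe that in a $\ZZ_p$-module divisibility coincides with $p$-divisibility, and read off the equivalence $u(G)\le\lambda\iff l_p(G)\le\omega\lambda$. The only difference is that the paper obtains the identity by citing the general formula $G^\lambda=\bigcap_{q\in\PP}q^{\omega\lambda}G$ from \cite[p.~154]{Fu1} and then discarding the factors for $q\neq p$ by $q$-divisibility, whereas you prove $G^\lambda=p^{\omega\lambda}G$ directly by transfinite induction together with the concatenation identity $p^\beta(p^\alpha G)=p^{\alpha+\beta}G$; your version is more self-contained but otherwise identical in structure.
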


\begin{proof}
It is shown in \cite[on page 154]{Fu1} that  $$G^\lambda=\bigcap_{p\in \PP}p^{\omega\lambda}G.$$
Since $G$ be a module over the $p$-adic integers we know that $G$ is $q$-divisible for every prime $q\neq p$, so we otain
$$G^\lambda=p^{\omega\lambda}G.$$
Using transfinite induction it is easily seen that $G^{\lambda}$ is a sub $\ZZ_p$-module of $G$, so $G^\lambda$ is $q$-divisible for every prime $q\neq p$.
Thus, $G^\lambda$ is divisible, iff
it is $p$-divisible. We now see that the following statements are equivalent:
  \begin{enumerate}
    \item $u(G)\leq{\lambda}$.
    \item $p^{\omega\lambda} G=G^\lambda$ is divisible.
    \item $p^{\omega\lambda} G=G^\lambda$ is $p$-divisible.
    \item $l_p(G)\leq\omega\lambda$.
  \end{enumerate}
\end{proof}

\begin{prop}\label{p:u max l p}
  Let $G$ be a cotorsion group and let $\lambda$ be an ordinal. Then $u(G)\leq{\lambda}$ iff $l_p(G)\leq\omega\lambda$ for every $p\in\PP.$
\end{prop}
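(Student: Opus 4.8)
The strategy is to localize at each prime and reduce to Proposition~\ref{p:u l p}. The input is Harrison's structure theorem for cotorsion groups (\cite{Har}; see also \cite[\S54]{Fu1}): every cotorsion group decomposes as
$$G\;\cong\;D\times\prod_{q\in\PP}R_q,$$
where $D$ is divisible and each $R_q$ is a reduced cotorsion group that is, moreover, a module over $\ZZ_q$. (That the $R_q$ are themselves cotorsion also follows from the fact that a direct summand of a cotorsion group is cotorsion, as $\Ext(\QQ,A\oplus B)\cong\Ext(\QQ,A)\oplus\Ext(\QQ,B)$.) I will use only three consequences of this: $D$ is divisible, hence $p$-divisible for every prime $p$; for a prime $p$ and any $q\neq p$ the integer $p$ is a unit of $\ZZ_q$, so $R_q$ is (uniquely) $p$-divisible; and Proposition~\ref{p:u l p} applies to each $R_q$ as a $\ZZ_q$-module.

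Next I would record two elementary facts and feed the decomposition into them. For any family $(A_i)_{i\in I}$ of abelian groups and any $n\in\ZZ$ one has $n\prod_i A_i=\prod_i nA_i$ (choose a preimage coordinatewise), and a transfinite induction then yields $\left(\prod_i A_i\right)^{\lambda}=\prod_i A_i^{\lambda}$ and $p^{\mu}\prod_i A_i=\prod_i p^{\mu}A_i$ for all ordinals $\lambda,\mu$; moreover a product of abelian groups is divisible (resp.\ $p$-divisible) precisely when each factor is. Since a divisible group is fixed by all of these operators, combining this with the decomposition above gives, for a fixed prime $p$ and every ordinal $\lambda$,
$$G^{\lambda}\;\cong\;D\times\prod_{q\in\PP}R_q^{\lambda},\qquad\qquad p^{\omega\lambda}G\;\cong\;D\times p^{\omega\lambda}R_p\times\prod_{q\in\PP,\ q\neq p}R_q,$$
where in the second isomorphism I used that $p^{\omega\lambda}$ fixes $D$ and fixes every $R_q$ with $q\neq p$.

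Finally I would chain equivalences. From the first isomorphism and the divisibility criterion for products, $u(G)\leq\lambda$, i.e.\ $G^{\lambda}$ is divisible, holds iff $R_q^{\lambda}$ is divisible for every $q$, i.e.\ iff $u(R_q)\leq\lambda$ for every $q$. From the second isomorphism, together with the $p$-divisibility of $D$ and of the $R_q$ with $q\neq p$, the group $p^{\omega\lambda}G$ is $p$-divisible iff $p^{\omega\lambda}R_p$ is $p$-divisible; that is, $l_p(G)\leq\omega\lambda$ iff $l_p(R_p)\leq\omega\lambda$. Since each $R_q$ is a $\ZZ_q$-module, Proposition~\ref{p:u l p} gives $u(R_q)\leq\lambda$ iff $l_q(R_q)\leq\omega\lambda$. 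Reading these three equivalences over all primes yields $u(G)\leq\lambda$ iff $l_p(G)\leq\omega\lambda$ for every $p\in\PP$, as claimed.

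The only delicate point is the structure-theoretic input: one must know that the $p$-adic component $R_p$ of a reduced cotorsion group is genuinely a $\ZZ_p$-module — equivalently, uniquely divisible by the primes different from $p$ — so that Proposition~\ref{p:u l p} can be applied to it verbatim. Everything else is bookkeeping with products and transfinite induction.
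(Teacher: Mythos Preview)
Your proof is correct and follows essentially the same route as the paper: both invoke the Harrison/Fuchs decomposition of a (reduced) cotorsion group as $\prod_{q\in\PP}R_q$ with $R_q$ a $\ZZ_q$-module, then feed each factor into Proposition~\ref{p:u l p} and use that the Ulm and $p$-power operators commute with products. The only cosmetic difference is that the paper first strips off the divisible summand via Theorem~\ref{t:div red} and works with a reduced $G$, whereas you carry $D$ along explicitly; the logical content is identical.
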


\begin{proof}
By decomposing $G$ into its divisible part and reduced part, as in Theorem \ref{t:div red}, we may assume that $G$ is reduced.

Since $G$ is reduced and cotorsion, $G$ can be written as a product of the form
  $G\cong\prod_{p\in\PP}G_p,$
  where every $G_p$ is a  module over the $p$-adic integers (see \cite[on page 234]{Fu1}). Thus, using Proposition \ref{p:u l p}, we see that the following statements are equivalent:
  \begin{enumerate}
    \item $u(G)\leq{\lambda}$.
    \item $G^\lambda$ is divisible.
    \item $G_p^\lambda$ is divisible for every prime $p$.
    \item $u(G_p)\leq{\lambda}$ for every prime $p$.
    \item $l_p(G)\leq\omega\lambda$ for every prime $p$.
  \end{enumerate}
\end{proof}

We now turn to the main result of this section.

\begin{thm}\label{t:Ulm}
 The natural map
  $$\rho:\Ab(\prod_{i\in\NN}H_i)\lrar \prod_{i\in\NN}\Ab(H_i)$$
is surjective and $\ker(\rho)$ is cotorsion and satisfies $u(\ker(\rho))\leq \aleph_1$.
\end{thm}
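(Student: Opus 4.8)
The plan is to deduce the theorem essentially for free from what has already been established, combining Theorem~\ref{t:lenght} with the structural Proposition~\ref{p:u max l p}. By Theorem~\ref{t:lenght} we already know that $\rho$ is surjective, that $S:=\ker(\rho)$ is cotorsion, and that $l_p(S)\leq\aleph_1$ for every prime $p$. So the only thing left is to convert this uniform bound on the $p$-lengths into a bound on the Ulm length.

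The key arithmetic observation is that, regarding $\aleph_1=\omega_1$ as an ordinal, one has $\omega\cdot\aleph_1=\aleph_1$. Indeed, since $\omega_1$ is a limit ordinal and ordinal multiplication is continuous in its right argument, $\omega\cdot\aleph_1=\sup_{\alpha<\omega_1}\omega\cdot\alpha$; every $\alpha<\omega_1$ is countable, so $\omega\cdot\alpha$ is again a countable ordinal and hence $<\omega_1$, giving $\sup\leq\omega_1$, while $\omega\cdot\alpha\geq\alpha$ gives the reverse inequality for the supremum. Consequently, for every prime $p$ we have $l_p(S)\leq\aleph_1=\omega\cdot\aleph_1$.

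Now I would simply apply Proposition~\ref{p:u max l p} to the cotorsion group $S$ with $\lambda=\aleph_1$: since $l_p(S)\leq\omega\aleph_1$ for every prime $p$, the proposition yields $u(S)\leq\aleph_1$, which is exactly the assertion (together with the surjectivity and cotorsion statements already recorded above).

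Since all the genuine work has already been done — the combinatorial heart in Theorem~\ref{t:lenght} (via Propositions~\ref{p:des} and \ref{p:des logic}) and the reduction of Ulm length to $p$-lengths in Propositions~\ref{p:u l p} and \ref{p:u max l p} — I do not expect any real obstacle here. The one place that warrants a word of care is the ordinal identity $\omega\cdot\aleph_1=\aleph_1$, and in particular not confusing $\omega\cdot\aleph_1$ with $\aleph_1\cdot\omega$, the latter being a strictly larger ordinal (of the same cardinality $\aleph_1$); it is crucial that in Proposition~\ref{p:u l p} the relevant ordinal is $\omega\lambda$ and not $\lambda\omega$.
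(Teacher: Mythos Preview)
Your proposal is correct and matches the paper's own proof almost verbatim: invoke Theorem~\ref{t:lenght} for surjectivity, cotorsion, and the bound $l_p(\ker\rho)\leq\aleph_1$, then apply Proposition~\ref{p:u max l p} with $\lambda=\aleph_1$ using $\omega\cdot\aleph_1=\aleph_1$. Your added justification of the ordinal identity $\omega\cdot\aleph_1=\aleph_1$ is a nice touch that the paper leaves implicit.
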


\begin{proof}
 By Theorem \ref{t:lenght} $\rho$ is surjective and $\ker(\rho)$ is cotorsion. By Proposition \ref{p:u max l p}, applied for the ordinal $\lambda:=\aleph_1$, we see that $u(\ker(\rho))\leq{\aleph_1}$ iff $l_p(\ker(\rho))\leq\omega\aleph_1=\aleph_1$ for every $p\in\PP.$ Thus the result follows from Theorem \ref{t:lenght}.
\end{proof}

\begin{rem}\label{r:example}
The natural map in Theorem \ref{t:Ulm} (and in particular, in Theorem \ref{t:S equation}) need not be an isomorphism in general. As we explain in the beginning of the proof of Theorem \ref{t:lenght}, the kernel of this natural map is isomorphic to the quotient
$$\prod \C(H_n)/\C(\prod H_n),$$
of the product of the commutators by the commutator of the product. It is not hard to see that the group $\C(\prod H_n)$ consists of those sequences $(h_n)\in \prod \C(H_n)$ for which the commutator length of $h_n$ is bounded, as $n$ ranges through the natural numers. It follows that if $G$ is any group, the natural map
$$\Ab(G^\NN)\lrar \Ab(G)^\NN$$
is an isomorphism if and only if the commutator width of $G$ is finite. This gives a lot of examples where this map is not an isomorphism. For instance, one can take $G$ to be a free group on more then one generator.
\end{rem}


\end{document}